\def\leq{\leqslant}
\def\geq{\geqslant}
\def\N{\mathbb{N}}
\def\R{\mathbb{R}}
\newtheorem{Proposition}{Proposition}[section]
\newtheorem{example}{Example}[section]
\newtheorem{lemma}[Proposition]{Lemma}
\newtheorem{definition}[Proposition]{Definition}
\newtheorem{theorem}[Proposition]{Theorem}
\begin{document}
\title{On the Asymptotic Properties of Piecewise Contracting Maps}
\author{E.\ Catsigeras$^1$, P.\ Guiraud$^2$, A.\ Meyroneinc$^3$ and E.\ Ugalde$^4$}
\maketitle
\begin{center}
\begin{small}
$^1$ Instituto de Matem\'{a}tica y Estad\'istica Rafael Laguardia, Universidad de la Rep\'{u}blica,\\
Montevideo, Uruguay\\
\texttt{eleonora@fing.edu.uy}

$^2$ Centro de Investigaci\'on y Modelamiento de Fen\'omenos Aleatorios Valpara\'iso, Universidad de Valpara\'{\i}so,\\
Valpara\'{\i}so, Chile\\
\texttt{pierre.guiraud@uv.cl}

$^3$ Departamento de Matem\'aticas, Instituto Venezolano de Investigaciones Cient\'ificas,\\
Apartado 20632, Caracas 1020A, Venezuela\\
\texttt{ameyrone@ivic.gob.ve}

$^4$ Instituto de F\'isica, Universidad Aut\'onoma de San Luis Potos\'i,\\
78000 San Luis Potos\'i, Mexico\\
\texttt{ugalde@ifisica.uaslp.mx}
\end{small}
\end{center}

\begin{abstract}

We study the asymptotic dynamics of maps which are piecewise contracting on a compact space. These maps are Lipschitz continuous, with Lipschitz constant smaller than one, when restricted to any piece of a finite and dense union of disjoint open pieces. 
We focus on the topological and the dynamical properties of the (global) attractor of the orbits that remain in this union. As a starting point, we show that the attractor consists of a finite set of periodic points when it does not intersect the boundary of a contraction piece, which complements similar results proved for more specific classes of piecewise contracting maps. 
Then, we explore the case where the attractor intersects these boundaries by providing examples that show  the rich phenomenology of these systems. Due to the discontinuities, the asymptotic behaviour is not always properly represented by the dynamics in the attractor.
Hence, we introduce generalized orbits to describe the asymptotic dynamics and its recurrence and transitivity properties.
Our examples include transitive and recurrent attractors, that are either finite, countable, or a disjoint union of a Cantor set and a countable set.
We also show that the attractor of a piecewise contracting map is usually a Lebesgue measure-zero set, and we give conditions ensuring that it is totally disconnected. Finally, we provide an example of piecewise contracting map with positive topological entropy and whose attractor is an interval.

\end{abstract}

Keywords: Piecewise contraction, Periodic points, Attractor, Recurrence.

MSC 2010:  37B25, 37B20, 54C08, 37N99

PACS: 05.45.-a
\section{Introduction}

Piecewise continuous dynamical systems have been considered as an alternative to classical continuous models for the study of  nonlinear phenomena from nature and engineering. Within this class, piecewise contracting systems have been used to model
the dynamics of dissipative systems interacting in a nonlinear way. For instance, they  appear in biological networks as discrete time models \cite{C08,CFLM06,LU06}, or as Poincar\'e return maps of continuous time models defined by piecewise continuous vector fields \cite{C10,CG10,EDW07,MS90}. More generally, the dynamics of many dissipative hybrid systems can be described by a piecewise contracting map (see for instance \cite{GFB12,R90}, the introduction of \cite{NP12}, and references therein). In some cases, piecewise continuous dynamical systems are more convenient for an exhaustive mathematical analysis. Nevertheless, the presence of discontinuities can produce phenomena that do not always exist in a continuous framework and which deserve detailed studies. Because of their good properties far from the discontinuities, piecewise contracting maps appear as suitable study models.

Several results have been obtained for particular families of piecewise contracting maps. Let us mention some of them which we consider representative. 
In \cite{GT88}, Gambaudo and Tresser gave a complete description of the possible asymptotic dynamics of quasi-contractions. These systems, which act on an arbitrary metric space, generalize one-dimensional piecewise contractions with two contracting domains by maintaining only their essential features. 
The characterization of the symbolic dynamics of these maps obtained in \cite{GT88} shows that their limit set is either composed of at most two periodic orbits or is a Cantor set supporting a quasi-periodic dynamics.
Some results in the same direction have also been obtained for piecewise contracting maps with more than two contraction pieces. 
In \cite{B06}, Br\'emont studied contracting interval exchange transformations
and proved, among other results, that the asymptotic dynamics is typically concentrated
in a finite number of periodic attractors whose number can be bounded by a linear function
of the number of monotonicity intervals of the transformation. This result was further refined
by Nogueira and Pires in \cite{NP12}, where they established that 
the number of periodic attractors is at most equal to the number of monotonicity 
intervals. 
In larger dimensions, piecewise contracting affine maps have received a particular attention. We can mention the result by Bruin and Dean \cite{BD09}, according to which the asymptotic dynamics of piecewise-affine contractions of the complex plane is typically supported by a finite number of periodic orbits. 
This result was also proved in \cite{C08,LU06} for a particular class of affine piecewise contractions on $\R^n$ introduced in \cite{CFLM06} as models of genetic regulatory networks. In addition it has been shown that the symbolic complexity of these systems (including non typical ones) grows sub-exponentially with time \cite{CMU08, LU06}. 
Finally, general piecewise contracting maps on $\R^n$ were studied in \cite{C10, CG10,R90}  
and the generic character of a periodic asymptotic dynamics was proved in \cite{CB11} under some 
separation property asumption (which is verified by injective maps, for instance).

Most works on piecewise contracting maps have studied specifics (affine) maps, which allowed to obtain detailed results concerning their periodic or quasi-periodic asymptotic behaviours. 
In the present paper, we propose to study the asymptotic dynamics of piecewise contracting maps in a larger framework. Our purpose is on one hand to obtain properties that are shared by all piecewise contracting maps defined on a compact set, and in the other hand to illustrate the rich phenomenology emerging from the presence of discontinuities by providing numerous examples, most of them contrasting with the (quasi-)periodic case.

Our discussion is organized as follows. In Section \ref{SECDEF}, we give the principal definitions and we review some of the classical notions used in the study of asymptotic dynamics, as those of attractor, limit set, and non-wandering set, in order to adapt them to discontinuous maps. 
In Section \ref{SECASYMP}, we show that when the attractor does not intersect the set of discontinuities, it is composed of a finite number of
periodic orbits. This result is true for any piecewise contracting map defined on a compact set and thus is independent of the particularities of the map (dimension of the space, number and geometry of the contractions pieces, in particular). In the sequel of the paper, we focus on the case where the attractor contains discontinuity points (which justifies our generalization of the asymptotic sets introduced in Section \ref{SECDEF}). In this situation the strong recurrence properties of the periodic case are lost in general, since attractor, limit set and non-wandering can be different. In Section \ref{SECDYNATT}, we show that the discontinuities can give rise to an asymptotic behaviour  which does not coincide with the dynamics of the map on the attractor. 
This is why we introduce the notion of ghost orbit, which are generalized orbits allowing to describe the asymptotic dynamics as well as to define recurrence and transitivity in the attractor. 
Section \ref{SECCOMP} deals with the dynamical complexity of piecewise contracting maps. There, we show that the total disconnectedness of the attractor is related to a low complexity of the dynamics and we give examples of piecewise contracting maps with a connected attractor. Our final example is a piecewise contracting map with positive topological entropy whose attractor is a Cantor set or an interval. In the concluding section we present our final comments and we point some directions for future research.


\section{Definitions}\label{SECDEF}

\begin{definition}\label{definitionPWC}
{\bf Piecewise contracting map.} \em Let $(X,d)$ be a compact metric space. We say that a map $f: X \to X$ is {\em piecewise contracting} if there exists a finite collection of $N \geq 2$ non-empty open subsets $\{X_i\}_{i=1}^N$, such that:

\noindent 1) $X = \bigcup_{i=1}^{N} \overline{X_i}$ and $X_i\cap X_j=\emptyset$ for all $i\neq j\in \{1,\ldots,N\}$.

\noindent 2) There exists $\lambda \in (0,1)$, called a \em contraction rate\em, such that for any $i \in \{1,\ldots,N\}$ we have
\begin{equation}\label{CONTRACTION}
d(f(x),f(y)) \leq \lambda \, d(x,y)
\end{equation}
if $x,y\in X_i$.

\noindent 3) The set $\widetilde X := \bigcap _{n= 0}^{+ \infty} f^{-n}(X\setminus\Delta) $, where $\Delta := X \setminus (\bigcup_{i=1}^N X_i)$, is non-empty.\em
\end{definition}

\noindent
If the set $\Delta$ is non-empty, then it contains all the discontinuity points of the piecewise contracting map. Let us note however that each restriction $f|_{X_i}$ of $f$ to a piece $X_i$ admits a continuous extension $f_i:\overline{X_i}\to X$ which we call the continuous extensions of $f$. In this paper we study the asymptotic dynamics of the points whose orbit never intersects $\Delta$, namely the points of ${\widetilde X}$. Nevertheless, we describe this  dynamics by mean of compact asymptotic sets that can intersect the set $\Delta$. The set ${\widetilde X}$ is dense in $X$ under mild hypothesis on $f$ (see Proposition \ref{TheoremDensidadXtildePierre}), and it is usually not compact.

\begin{definition} \label{ATTRACTOR}
{\bf Attractor $\Lambda$.} \em Let $f:X\to X$ be a piecewise contracting map and consider the family of transformations $\{F_i\}_{i\in\{1,\dots,N\}}$ defined for all $A\subset X$ by $F_i(A)=\overline{f(A\cap X_i)}$.
We say that $A$ is an \em atom of generation $n\geq 1$ \em if
\begin{equation} \label{EquationAtom}
A = F_{i_n}\circ F_{i_{n-1}}\circ\dots\circ F_{i_1}(X)
\end{equation}
for some $i_1,i_2,\dots,i_n\in\{1,\dots,N\}$.
Denote by $\mathcal{A}_n$ the set of all the atoms of generation $n$, and for all $n\geq 1$ let
$\Lambda_n:=\bigcup\limits_{A\in\mathcal{A}_n} A$. We call
$\Lambda:=\bigcap\limits_{n\geq 1}\Lambda_n$ the {\em attractor} of $f$.\em
\end{definition}

\noindent
Note that any atom is a subset of an atom of the previous generation. It implies that $\Lambda_{n+1} \subset \Lambda_n$ for all $n \geq 1$. Since $\widetilde X$ is non-empty, there exist a non-empty atom of any generation.
The sets $\Lambda_n$, and therefore $\Lambda$, are  non-empty and compact.
Here, we use the terminology \em attractor \em in a formal analogy with the attractor of iterated functions systems \cite{FAL}. Indeed, $\Lambda$
can also be written as $\Lambda=\bigcap_{n\in\N}G^n(X)$, where for any compact set $A\subset X$, the map $G$
is defined by $G(A)=\bigcup_{i=1}^Nf_i(A\cap \overline{X_i})$. The invariance of the attractor by $G$ implies that $f_i(x)\in\Lambda$ for any $x\in\Lambda$ and $i\in\{1,\dots,N\}$ such
that $x\in \overline{X_i}$. Unless it does not intersect $\Delta$, the attractor is not necessarily invariant by $f$, but for any $x\in\Lambda$ there is always a continuous extension $f_i$ such that $f_i(x)\in\Lambda$.

We also consider other asymptotic sets traditionally used for continuous maps, but we adapt their definitions to deal with discontinuities.

\begin{definition}\label{definitionNoErrante}
{\bf Non-wandering set $\Omega$.} \em A point $x \in X$ is {\em non-wandering} if for all $\epsilon >0$ there exists a sequence $\{n_k\}_{k \in \mathbb{N}}$ going to infinity and such that the ball $B(x,\epsilon)$ of center $x$ and radius $\epsilon$ satisfies:
\begin{equation}\label{equationNoErrante}
 f^{n_k}(B(x, \epsilon) \cap \widetilde X) \bigcap B(x, \epsilon) \neq \emptyset \qquad \forall \, k \in \mathbb{N}.
\end{equation}
We denote by $\Omega$, and call the {\em non-wandering set}, the set of all the non-wandering points.
A point $x\in X$ is {\em wandering } if $x \not \in \Omega$.\em
\end{definition}

\noindent Contrarily to continuous maps, the existence for all $\epsilon>0$ of a $n>0$ such that  $f^{n}(B(x,\epsilon)\cap \widetilde X)$ and  $B(x, \epsilon)$ have a non-empty intersection is not equivalent with the existence for all $\epsilon>0$ of a sequence $\{n_k\}_{k\in\N}$ such that $x$ satisfies (\ref{equationNoErrante}). This equivalence is true if $x\in\widetilde X$, but not necessarily otherwise (see the final comment of Example \ref{ejemploNuevoFebrero2011}). This why we use the sequence  $\{n_k\}_{k\in\N}$ in order to ensure a sufficiently strong recurrence property to the non-wandering points belonging to $\Delta$. However, as for continuous maps, the set of the wandering points is open an thus $\Omega$ is closed.

\begin{definition}\label{definitionlimitSet}
{\bf Limit set $L$.} \em We say that $y\in X$ is an {\em $\omega$-limit point} of $x\in\widetilde X$ if there exists a sequence $\{n_k\}_{k \in \mathbb{N}}$ going to infinity and such that $\lim\limits_{k\to\infty} f^{n_k}(x)=y$.
We denote by $\omega(x)$, and call {\em $\omega$-limit set of $x$},  the set of all the $\omega$-limit points of $x$. We call  
 ${L:=\overline{\bigcup_{x\in\widetilde{X}}\ \omega(x)}}$ {\em the limit set  of $f$}.\em
\end{definition}

\noindent Since $\widetilde X \neq \emptyset$ and $X$ is compact, $\omega(x) \neq \emptyset$ for all  $x \in  \widetilde X$  and $L \neq \emptyset$.

Note that two piecewise contracting maps that coincide on $X\setminus\Delta$ share the same asymptotic sets, since the attractor, the non-wandering set and the limit set are entirely determined  by the orbits of the points of $\widetilde X$. In order to study the recurrence and transitivity properties on the attractor, when it intersects $\Delta$, we will need to introduce generalized orbits (see Section \ref{SECDYNATT}).

\section{On the asymptotic sets}\label{SECASYMP}

As a starting point, we state a general property of piecewise contracting maps concerning the periodicity of the dynamics in the attractor.

\begin{theorem}\label{SEPARATION}
Let $\Delta$ be the set of discontinuities of a piecewise contracting map and $\Lambda$ be its attractor.
If $\Delta = \emptyset$ or if $d(\Lambda,\Delta)\neq 0$, then $\Lambda$ is a finite union of periodic orbits.
\end{theorem}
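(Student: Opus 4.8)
The plan is to exploit the gap $\delta:=d(\Lambda,\Delta)>0$ to turn the dynamics on $\Lambda$ into a genuinely contracting continuous system, and then to prove by a finiteness--plus--contraction argument that $\Lambda$ is a finite union of periodic orbits. (The case $\Delta=\emptyset$ is degenerate: then $\tilde X=X$, each $X_i$ is clopen, and the same argument applies with $\delta$ replaced by the minimal pairwise distance between the finitely many compact pieces $\overline{X}_i$, so I focus on $d(\Lambda,\Delta)>0$.) First I would reduce to a continuous setting. Since the compact sets $\Lambda_n$ decrease to $\Lambda$, for every open $U\supset\Lambda$ one has $\Lambda_n\subset U$ for $n$ large; taking $U$ to be the open $\delta/2$--neighbourhood of $\Lambda$ yields an index $m$ with $\Lambda_m\subset\bigcup_i X_i$ and $d(\Lambda_m,\Delta)\geq\delta/2$. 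Because $\Lambda_m$ avoids $\Delta$ and is forward invariant ($f(\Lambda_m)\subset\Lambda_{m+1}\subset\Lambda_m$ by essential invariance), all forward iterates stay in $\Lambda_m$, so $\Lambda_m\subset\tilde X$; as noted after Definition \ref{ATTRACTOR} this gives $\Lambda_{n+1}=f(\Lambda_n)$ for $n\geq m$, $\Lambda=\bigcap_{k\geq0}f^{k}(\Lambda_m)$ and $f(\Lambda)=\Lambda$, with $f$ continuous on the safe region $S:=\{x\in X:\ d(x,\Delta)\geq\delta/2\}$.

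Next I would record two quantitative facts. Covering the compact set $S$ by connected open sets each contained in a single piece $X_i$ (possible by local connectedness), a Lebesgue number $\rho_0>0$ of this cover guarantees that every subset of $S$ of diameter $<\rho_0$ lies in one piece, so that $f$ is $\lambda$--contracting on it. Choosing $\rho<\min(\delta/2,\rho_0/2)$ and arguing by induction with $f^{n}(B(y,\rho))\subset B(f^{n}(y),\lambda^{n}\rho)$ (valid as long as each image stays in $S$ and has diameter $<\rho_0$), I obtain for every $y\in\Lambda$ that $B(y,\rho)\subset\tilde X$ and that nearby orbits shadow and contract, $d(f^{n}(u),f^{n}(v))\leq\lambda^{n}d(u,v)$ whenever $u,v$ lie in a common ball $B(y,\rho)$ with $y\in\Lambda$. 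In particular $\Lambda\subset\mathrm{int}(\tilde X)$.

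Then comes the combinatorial core. There are at most $N^{n_0}$ atoms of a fixed generation $n_0$, and by Remark \ref{ATOMESPASCONNEXES} I may take $n_0$ so large that every atom meeting $\Lambda$ has diameter $<\min(\delta,\rho_0)$ and therefore sits inside a connected ball contained in a single piece. List these finitely many \emph{alive} atoms $A^{(1)},\dots,A^{(q)}$. Using the nesting property (any atom is contained in an atom of the previous generation) I would define a successor map $\sigma$ by $\overline{f(A^{(s)})}\subset A^{(\sigma(s))}$; as a self--map of a finite set it is eventually periodic. Along a $\sigma$--cycle of length $\ell$ through a label $e$, the map $f^{\ell}$ sends $A^{(e)}$ into itself and, since each atom of the cycle lies in a single piece, is a $\lambda^{\ell}$--contraction; Banach's theorem then yields a unique fixed point $z_e\in A^{(e)}\cap\Lambda$, a periodic point of $f$. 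The essential invariance $f(\Lambda)=\Lambda$ shows that every $u\in\Lambda$ has preimages of all orders, so one can choose its atom--label in $\bigcap_k\mathrm{Im}(\sigma^{k})$, i.e.\ in a $\sigma$--cycle; hence $u$ lies in a cyclic atom $A^{(e)}$ and $f^{m\ell}(u)\to z_e$. Thus $\Lambda$ is asymptotic to the finite set $P:=\{z_e\}$, and one checks by uniqueness of fixed points that $f(P)=P$.

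The last and hardest step is to upgrade ``asymptotic to $P$'' into ``equal to $P$'', i.e.\ to prove $A^{(e)}\cap\Lambda=\{z_e\}$ for each cyclic label. Using $\Lambda=f^{\ell}(\Lambda)$ I would build, for $w\in A^{(e)}\cap\Lambda$, a backward orbit remaining in $A^{(e)}\cap\Lambda$ and apply the contraction backwards, $d(w,z_e)=d(f^{\ell m}(w_{-m}),f^{\ell m}(z_e))\leq\lambda^{\ell m}\,\mathrm{diam}(A^{(e)})\to0$, forcing $w=z_e$ and hence $\Lambda=\bigcup_e\{z_e,f(z_e),\dots\}$. I expect the main obstacle to be precisely this step, because the map is \emph{not} assumed atomically one--to--one (Definition \ref{PARAATOMOS}) and atoms need not be connected: distinct alive atoms may overlap, so $\sigma$ is only defined up to a choice, and the backward orbit of a point in a cyclic atom need not remain in that atom. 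This is exactly where the non--connectedness warning of Remark \ref{ATOMESPASCONNEXES} bites. I would resolve it by replacing individual indexed atoms with the connected components of $\Lambda_n$ inside $S$, which the Remark guarantees to be genuine balls lying in single pieces, and tracking these components instead, so that the successor dynamics becomes honestly single--valued and the backward contraction can be carried out without ambiguity; this would complete the proof that $\Lambda$ is a finite union of periodic orbits.
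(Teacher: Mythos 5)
Your argument is sound, and essentially parallel to the paper's, up to the construction of the finite set $P$: your Lebesgue--number reduction plays the role of the paper's Lemma \ref{LemmaATTRACTOR} (atoms of large generation lie in a single piece), your successor map $\sigma$ is the paper's transformation $G$ of Step 2, and the choice--ambiguity of $\sigma$ that worries you is harmless for all \emph{forward} statements once a choice is fixed, since $f^k(A^{(s)})\subset A^{(\sigma^k(s))}$ follows by induction. The genuine gap is exactly where you locate it, but your proposed repair does not work. Remark \ref{ATOMESPASCONNEXES} only asserts that an atom of large generation is \emph{contained in} some connected ball; it does not say that the connected components of $\Lambda_n$ are balls, and, decisively, nothing bounds their number: the Appendix (Proposition \ref{PropositionEjemploAtomosInfinitasCC}) exhibits atoms of every generation with infinitely many connected components, and at this stage of the argument you do not yet know that $\Lambda$ is finite, so even the components meeting $\Lambda$ could be infinitely many. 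Your component dynamics is indeed single--valued (a component of small diameter lies in one piece, so its image is connected and lies in a unique component of $\Lambda_n$), but it acts on a possibly infinite set, so the pigeonhole that produces eventual periodicity --- the engine of your cycle structure and of the backward--contraction step --- is lost. As written, the final step is therefore unproved.

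The step can be closed without ever asking a backward orbit to stay in one atom, and this forward argument is what the paper does in its Step 3. Since $f(\Lambda)=\Lambda$ and $\Lambda\cap\Delta=\emptyset$, every $w\in\Lambda$ admits, for each $m$, a point $w_{-m}\in\Lambda$ with $f^m(w_{-m})=w$. Now $w_{-m}$ lies in \emph{some} alive atom $A^{(s_m)}$ (not a fixed one), and for $m\geq q$ the label $e_m:=\sigma^{q}(s_m)$ is cyclic, so $w\in f^{m-q}(A^{(e_m)})$, a set of diameter at most $\lambda^{m-q}\,\mathrm{diam}(X)$ containing the periodic point $f^{m-q}(z_{e_m})\in P$. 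Hence $d(w,P)\leq\lambda^{m-q}\,\mathrm{diam}(X)$ for every $m$, so $w\in P$ because $P$ is finite; essential invariance then gives that $\Lambda$ is the union of the corresponding periodic orbits. The paper packages the same idea as a pigeonhole: $y\in\Lambda=\bigcap_j f^{jp_0}(\Lambda_{n_0+m_0})$ forces $y\in f^{j_k}(A)$ for one fixed atom $A$ and infinitely many multiples $j_k$ of $p_0$, and $y$ is then compared with the $p_0$--periodic point attached to $A$. One caveat when you compare: the paper's display (\ref{PARAFIXPOINT}), namely $f^{p_0}(A)\subset A$ for \emph{all} $A\in\mathcal{A}_{n_0+m_0}$, is stronger than what the definition of $G$ yields and can actually fail for an atom $f^{m_0}(A)$ still converging towards, but not containing, the relevant periodic point; what is immediate, and suffices for Step 3, is $f^{p_0}(P')\subset G^{p_0}(P')=P'$ for the $G$--periodic atoms $P'\in\mathcal{P}_{n_0}$ --- precisely the analogue of your correct claim $f^{\ell}(A^{(e)})\subset A^{(e)}$ for cyclic labels.
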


\noindent We prove Theorem \ref{SEPARATION} in Section \ref{proofTheoremSEPARATION}.
The finite periodicity of the limit set has been reported for special classes of piecewise contracting maps: one-dimensional injective maps \cite{B06}, two-dimensional affine maps \cite{BD09}, arbitrary dimensional affine maps with convex continuity pieces \cite{C08,CMU08,LU06} and more general real maps satisfying a so-called separation property \cite{C10,CB11} or
with two contraction pieces \cite{GT88}. 
In each of these works, the specificity of the maps allows to obtain additional results, such as the genericity of the asymptotic periodicity or a bound on the number of periodic orbits. Nevertheless, one does not need more than the hypothesis of Theorem \ref{SEPARATION} to ensure the finite periodicity of the attractor of piecewise contracting maps as general as those of Definition \ref{definitionPWC}. 

The principal idea of the proof is the same as in the particular cases. 
However, for the maps we consider, the successive iterations  (namely, the atoms in Definition \ref{ATTRACTOR}), may have infinitely many connected components. This situation cannot occur for specific maps as considered before, for example, if all the  pieces are convex and embedded in $\mathbb{R}^n$ and the map is an affinity in each piece. 
When the atoms have an infinite number of connected components, there is no guarantee a priori, that each connected component of the atom has an iterate which returns inside itself. This avoids an immediate reduction of the proof of Theorem \ref{SEPARATION} using the Banach fixed point theorem, for instance.

When the attractor of a piecewise contracting map does not contain discontinuity points, the effect of the discontinuities is the emergence of a finite number of periodic orbits in the attractor.
Therefore, the discontinuities increase the complexity of the dynamics 
compared with pure contractions, but in both cases the dynamics is regular.
From now on we are interested in the situation where there are always orbits in ${\widetilde X}$ arbitrarily close to discontinuity points. We will see that most of the characteristics of the periodic case are lost and a rich phenomenology appears.

\begin{Proposition}\label{OMEGALAMBDA}
Let $f:X\to X$ be a piecewise contracting map.
Then the chain of inclusions $L \subset \Omega \subset \Lambda$ holds.\footnote{It is actually true for any piecewise continuous map, disregarding whether it is piecewise contracting.}
Besides, if $X$ is locally connected, then any non-wandering point in the interior of $\widetilde X$ is periodic.
\end{Proposition}

We prove Proposition \ref{OMEGALAMBDA} in Section \ref{proofTheoremOMEGALAMBDA}. An obvious consequence of Theorem \ref{SEPARATION} is that the limit set, the non-wandering set and the attractor are finite and coincide when the attractor does not intersect the set of discontinuities of the map. As shown by the following examples, when the attractor contains discontinuity points, this equality does not hold in general. In Example \ref{OpenExample2right}, the non-wandering set  and the limit set are equal, but the attractor contains an infinite number of wandering points in the interior of $\widetilde{X}$.  

\begin{example} \label{OpenExample2right}\em Consider the square $X = [0,1]^2 $ and the two pieces $X_1=\{(x,y) \in X: y<x^2\}$ and $X_2=\{(x,y) \in X : y>x^2\}$ with the parabolic boundary $\Delta=\{(x,y) \in X : y = x^2\}$. Let $\lambda\in(0,1)$ and define on $\overline{X_1}$ the map $f_1(x,y)=\lambda (x,y)$ and on $\overline{X_2}$ the map $f_2(x,y) = \lambda (x,y-1) + (0,1)$. Let $f : X \to X$ be such that $f|_{X_i}=f_i|_{X_i}$ for all $i\in\{1, 2\}$ (see the left frame of Figure \ref{FigureEjemploNuevoFebrero2011}.) One can check that $L = \Omega = \{(0,0),(0,1)\} $ and $ \Lambda = \{(0,1)\} \bigcup \{(0,1-\lambda^n) : n\in\N\}$. Thus, the attractor is countably infinite, but the non-wandering and the limit sets are finite. 
\end{example}

\begin{figure}[h]
\setlength{\unitlength}{0.5truecm}
\begin{center}
\begin{picture}(28,9)(0,0)

\put(0,1){\vector(0,1){8.5}}
\put(0,1){\vector(1,0){8.5}}
\put(3.7,7){$X_2$}
\put(6.8,5){$X_1$}
\put(8,9){\line(-1,0){8}}
\put(8,9){\line(0,-1){8}}
{\color[rgb]{0,0,1} \qbezier(0,1)(4,1)(8,9)}  

\put(0,1){\circle*{.15}}
\put(-0.3,0.3){$0$}
\put(0,9){\circle*{.15}}
\put(-0.7,8.8){$1$}
\put(3,1){\circle*{.15}}
\put(2.7,0.3){$\lambda$}

\put(7.9,0.3){$1$}
\put(0,6){\circle*{.15}}
\put(0.1,5.5){{\tiny $1-\lambda$}}
\put(0,7.7){\circle*{.15}}
\put(0.1,7.5){\tiny $1-\lambda^2$}
\put(0,8.5){\circle*{.15}}
\put(0.1,8.3){\tiny $1-\lambda^3$}

\multiput(0,8.82)(0,0.12){2}{\circle*{0.15}}

\put(0,1){\line(1,0){3}}
\put(3,1){\color[rgb]{1,0,0}\line(0,1){3}}
{\color[rgb]{1,0,0}\qbezier(0,1)(1.5,1)(3,4)}
\put(0.3,3){\small $f_1(X_1)$}
\put(1.4,2.8){\vector(1,-1){.8}}

{\color[rgb]{1,0,0}\qbezier(0,6)(1.7,6)(3,9)}
\put(2.3,6){\small $f_2(X_2)$}
\put(2.9,6.6){\vector(-1,1){0.8}}


\put(12,1){\line(0,1){8}}
\put(28,9){\line(0,-1){8}}
\put(28,9){\line(-1,0){16}}
\put(14.7,6){$X_3$}
{\color[rgb]{0,0,1} \qbezier(20,9)(20.4,5.6)(12,1)}  

\put(22.5,5.5){$X_2$}
{\color[rgb]{0,0,1} \qbezier(20,1)(24,1)(28,9)}  
\put(26.7,5.2){$X_1$}
\put(12,1){\vector(1,0){16.5}}
\put(11.6,0.4){$-1$}
\put(20,1){\vector(0,1){8.5}}
\put(20,1){\circle*{0.15}}
\put(19.9,0.4){$0$}

\put(27.8,0.4){$1$}

\put(22.5,1){\circle*{.15}}
\put(22.3,0.4){$\lambda$}

\put(20,6.5){\circle*{.15}}
\put(20.2,6.35){\tiny $1-\lambda$}

\put(20,8.2){\circle*{.15}}
\put(20.2,8.0){\tiny $1-\lambda^2$}

\put(20,8.75){\circle*{.15}}
\put(20.2,8.6){\tiny $1-\lambda^3$}

\multiput(20,8.94)(0,0.06){2}{\circle*{0.15}}

\put(22.5,1){\color[rgb]{1,0,0}\line(0,1){2.5}}
{\color[rgb]{1,0,0}\qbezier(20,1)(21.1,1)(22.5,3.5)}
\put(20.1,2.8){\small $f_1(X_1)$}
\put(21.1,2.5){\vector(1,-1){0.8}}

\put(28,3.5){\color[rgb]{1,0,0}\line(-1,0){2.5}}
{\color[rgb]{1,0,0}\qbezier(25.5,3.5)(25.5,2.5)(28,1)}
\put(25.9,2.8){\small $f_3(X_3)$}

{\color[rgb]{1,0,0}\qbezier(20,9)(20,7.8)(17.5,6.5)}
\put(20,6.5){\color[rgb]{1,0,0}\line(-1,0){2.5}}
{\color[rgb]{1,0,0}\qbezier(20,6.5)(21.1,6.5)(22.5,9)}
\put(22.1,7.5){\small $f_2(X_2)$}
\put(22,7.6){\vector(-1,0){0.8}}
\end{picture}

\caption{The contraction pieces ($\Delta$ is shown in blue) and their images (boundaries in red) with  points of $\Lambda$ for two piecewise contracting maps.  
Left frame: $L = \Omega \subsetneq \Lambda$ (Example \ref{OpenExample2right}).
Right frame: $L \subsetneq \Omega = \Lambda$ (Example \ref{ejemploNuevoFebrero2011}).
}
\label{FigureEjemploNuevoFebrero2011}
\end{center}
\end{figure}
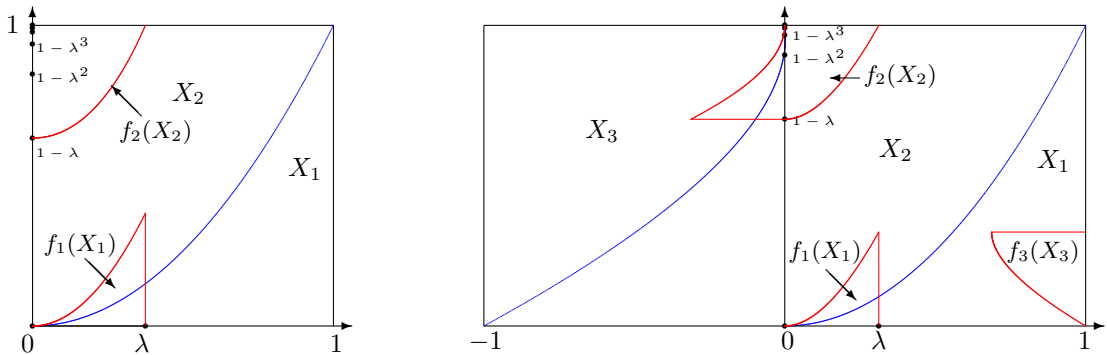


\noindent Now, in Example \ref{ejemploNuevoFebrero2011} all the points of the attractor are non-wandering but not all are limit points.  

\begin{example} \label{ejemploNuevoFebrero2011} \em Let $X := [-1,1] \times [0,1]$ and consider the three open pieces  $X_1 := \{(x,y) \in X : y < x^2, x\geq 0 \}$, $X_2 : = \{(x,y) \in X : y > x^2, x\geq 0\}\setminus\{(0,1)\}\cup\{(x,y) \in X :  y < (x+1)^2,x<0  \}$ and $X_3 : = \{(x,y) \in X : (x+1)^2< y,x<0\}$ (see right frame of Figure \ref{FigureEjemploNuevoFebrero2011}). The set of discontinuities is thus
\[
\Delta = \{(x,y) \in X: y= x^2, x\geq 0\} \cup \{(x,y) \in X : y= (x+1)^2, x\leq 0\}.
\]
Let $\lambda\in(0,1)$ and define on $X$ the maps
$f_1(x,y) = \lambda (x,y)$,  $ f_2(x,y) = \lambda (x,y-1)+ (0,1) $ and $ f_3(x,y) = -\mu (x+1,-y) + (1,0)$, where
$\mu\in(0,3/2-\sqrt{5}/2)$. Then for any piecewise contracting map $f:X\to X$ such that $f|_{X_i}=f_i|_{X_i}$ for all $i\in\{1, 2,3\}$, we have
\begin{center}
$L = \{(0,0),(0,1)\} $ and $ \Omega = \Lambda =  \{(0,1)\} \bigcup { \{(0,1-\lambda^n) : n\in\N, \ n \geq 0\} }$.
\end{center}

\noindent Thus, the attractor $\Lambda$ is countably infinite, all its points are non-wandering and the limit set is finite. Also, the points of $\Omega$ are not in $\text{int}(\widetilde X)$ and therefore the hypothesis of the last assertion of Proposition \ref{OMEGALAMBDA} does not hold. Since the points of $\Omega$ are not periodic, the conclusion also fails. As a final comment, note that if we take $X_3=[-1,0)\times[0,1]$ and $X_2$ as in Example \ref{OpenExample2right}, then the intersection of $\widetilde X$ with any sufficiently small ball centered in one of the point $(0,1-\lambda^n)$, with $n\neq 0$, has an image which intersects itself only once. These points became therefore wandering. 
\end{example}

A continuous system such that $\Lambda\supsetneq\Omega$ or $\Omega\supsetneq  L$ has  some topological expansion, or at least the lack of a  contracting rate, in some part of the space.
For instance $\Omega \supsetneq L$  for bifurcating diffeomorphisms on compact manifolds that exhibit a homoclinic tangency of a dissipative saddle periodic point \cite{palis}.
As said above, piecewise contracting systems may also exhibit $\Lambda \supsetneq \Omega$ in spite of the uniform contracting rate in their pieces.
Hence, the set $\Delta$ of discontinuities acts as  a generator of a peculiar topological expansion,
and piecewise contracting maps have a topological flavor of partial hyperbolicity. In fact, we provide in Example \ref{exampleSegmentoDic2011} a piecewise contracting map whose discontinuities produce a chaotic attractor  contained in $\Delta$.

\section{Dynamics on the attractor}\label{SECDYNATT}

We are now interested in the dynamical properties of the attractors of piecewise contracting maps. 
As mention earlier,  depending on the definition of the map on the set of discontinuities the attractor may fail to be invariant. 
This makes non trivial to define a dynamics in the attractor which is {\it representative} of the asymptotic behaviour of the points of ${\widetilde X}$. One of the goals of this section is to define such a dynamics on the attractor by introducing the concept of {\it ghost orbit}. It will also allow to adapt the notions of recurrence and of transitivity for attractors containing discontinuity points. 

In some case it is possible to define a representative dynamics in the attractor by  multi-defining the
map in the intersection of the attractor with the discontinuities, using its continuous 
extensions\footnote{Recall that the attractor is invariant under these transformations.}. The following example illustrates this fact, and also shows that when the attractor intersects the discontinuities it is not necessarily of infinite cardinality.

\begin{example} \label{ExampleFinito} \em  Consider the rectangle $X = [-1,1]\times [0,1]$ and let $c>0$. Define the set $\Delta:=\{(x,y)\in X: y=x^2-c^2\}\cup\{x=0\}$ and let $X_1 =\{(x,y) \in [0,1]^2 : y < x^2-c^2\}$, 
$X_2 = \{(x,y) \in [-1,0]\times[0,1]: y < x^2-c^2\}$, $X_3 =[-1,0]\times[0,1]\setminus(X_2\cup\Delta)$, and 
$X_4 =[0,1]^2\setminus(X_1\cup\Delta)$, (see left frame of Figure \ref{FIGPERIODIC}).

Let $\lambda\in(0,1)$ and $f_1:\overline{X_1}\to X$ and $f_2:\overline{X_2}\to X$ be defined by $f_1(x,y) := \lambda(x-c,y)$ and $f_2(x,y) =\lambda(x+c,y)$. For all $p\in\N$ let $x_p=c\sum_{k=0}^p\lambda^{-k}$. Take $n\geq 1$ and suppose from now on that $c$ is such that $x_n<1$. Then, we have $f_1(x_{p},0)=(x_{p-1},0)$, $f_2(-x_p,0)=(-x_{p-1},0)$ for all $p\in\{1,\dots,n\}$ and $f_1(x_0,0)=f_2(-x_0,0)=(0,0)$. Now, let $f_3:\overline{X_3}\to X$ and $f_4:\overline{X_4}\to X$ with $f_3(x,y) := \mu(-x,y) + (x_n,0)$ and   $f_4(x,y):= \mu (-x,y) - (x_n,0)$, where $0<\mu<\min\{1-x_n,x_n^2-c^2\}$. Then, we have $f_3(0,0)=(x_n,0)$, $f_4(0,0)=(-x_n,0)$ and the condition on $\mu$ ensures that $f_3(\overline{X_3})\subset X_1$ and  $f_4(\overline{X_4})\subset X_2$.

\begin{figure}[!ht]
\begin{center}
\includegraphics{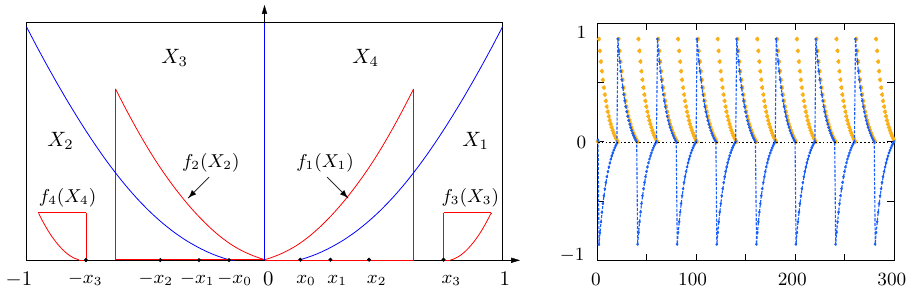}
\caption{
Piecewise contracting map with a finite transitive attractor described by a periodic ghost orbit (Example \ref{ExampleFinito}).
Left frame: the contraction pieces of $f$ and their images with the points of $\Lambda$ for $n = 3$.
Right frame: the $x$ component of $f^k(-c,0.9)$ (blue) and of $f^k(x_n,0)$ (orange) for $k \leq 300$, with $f(0,0) = f_3(0,0)$, $\lambda = 0.9$, $c = 1.5 \times 10^{-2}$ $(n = 18)$ and $\mu = 0.12$.}  
\label{FIGPERIODIC}
\end{center}
\end{figure}

Consider a piecewise contracting map $f: X \to X$ such that $f|_{X_i} = f_i|_{X_i} $ for all $i\in\{1, 2, 3, 4\}$, then one can show that
\begin{equation}\label{LIMITPER}
\omega((x,y))=\{(0,0)\}\cup\{(-x_p,0),(x_p,0)\}_{p=0}^{n}=L=\Lambda\quad\forall\,(x,y)\in{\widetilde X}.
\end{equation}
Alternatively, if we denote $\{\theta_k\}_{k\in\N}$ the periodic sequence such that $\theta_k=1$ for all $k\in\{0,\dots,n\}$,
$\theta_{n+1}=4$, $\theta_k=2$ for all $k\in\{n+2,\dots,2n+2\}$ and $\theta_{2n+3}=3$, then $\Lambda$ can be written as the orbit of $(0,x_n)$ by a multi-valued map:
\begin{equation}\label{ghostperiodic}
\Lambda=\{f^k_{\theta}(x_n,0)\}_{k\in\N}\quad\text{where}\quad f^k_{\theta}(x,y):=f_{\theta_k}\circ\dots\circ f_{\theta_1}\circ f_{\theta_0}(x,y).
\end{equation}
Moreover, there is a neighborhood $U$ of $(x_n,0)$ such that $\lim\limits_{k\to\infty}d(f^k(x,y),f^k_{\theta}(x_n,0))=0$ for any point in $(x,y)\in{\widetilde X}\cap U$. Together with \eqref{LIMITPER}, this implies that all the points of ${\widetilde X}$ follow the orbit of $(x_n,0)$ by $f_\theta$, which covers the whole attractor and is periodic of
period $2n+4$. Note that $f^{n+1}_\theta(x_n,0)=f_4(0,0)$  and $f^{2n+3}_\theta(x_n,0)=f_3(0,0)$. 
Therefore, the asymptotic behaviour of the points of ${\widetilde X}$ is described by a periodic sequence which is not an orbit of the map $f$ (but the orbit of a multi-valued map) and this, independently of the definition of $f$ in $\Delta$.    
See right frame of Figure \ref{FIGPERIODIC}.

\end{example}

Example \ref{ExampleFinito} shows that the asymptotic dynamics of the point of ${\widetilde X}$ is well represented by an  orbit of a multi-valued map constructed with the continuous extensions of the original map. But in the forthcoming examples of this section, a multi-valued map will not be enough to describe the asymptotic behaviours, and this is why we introduce the concept of ghost orbit. A ghost orbit is a generalized orbit: it can be a usual orbit, the orbit of a multi-valued map, or a more sophisticated set whose points are ordered with time sets of order type larger than $\boldsymbol{\omega}$.

\begin{definition}\label{RemarkGhost}{\bf Ghost orbit.} \em Let  $T$ be a well-ordered infinite set with smallest element $t_0$, and consider the map $s:T\to T$ defined by $s(t)=\min\{s\in T: s>t\}$ (by convention we suppose that
$\min\{\emptyset\}=+\infty$). Let $\widehat{T}:=\{t\in T: s^{-1}(t)=\emptyset\}$ and $\hat{s}(t)=\min\{s\in \widehat{T}: s>t\}$. A map $\Phi:T\to X$ is said to be a \em ghost orbit \em of a piecewise contracting map $f:X\to X$, if

\noindent 1) For all $t\in T$ there exists $i\in\{1,\dots,N\}$ such that $\Phi(s(t))=f_i(\Phi(t))$.

\noindent 2) For all $t\in\widehat{T}$ such that $\hat{s}(t)\neq+\infty$ 
there exists $x^\ast\in\overline{\{\Phi(s^k(t))\}_{k\in\N}}\cap\Delta$ such that $\Phi(\hat{s}(t))=f_i(x^\ast)$ for a $i\in\{1,\dots,N\}$. 

\noindent 3) There exists $\epsilon_0>0$ such that for all $\epsilon\in(0,\epsilon_0)$ there is a point $x\in{\widetilde X}$ whose orbit is \em $\epsilon$-close \em to $\Phi$, that is, for an increasing map $k:\widehat{T}\cup\{+\infty\}\to\N\cup\{+\infty\}$  
with $k(t_0)=0$ and $k(+\infty)=+\infty$, we have 

\begin{equation}\label{ECLOSE}
d(f^k(x),\Phi(s^{k-k(t)}(t)))<\epsilon\qquad\forall\,k\in[k(t),k(\hat{s}(t))),\quad\forall\, t\in\widehat{T},
\end{equation}
and
\begin{equation}\label{JUMP}
d(f^{k(\hat{s}(t))}(x),\Phi(s^{k(\hat{s}(t))-k(t)}(t)))\geq\epsilon
\end{equation}
for all $t$ such that $\hat{s}(t)\neq+\infty$.  
\end{definition}

\noindent Note that if $\widehat{T}$ is bounded, then conditions $2)$ and inequality (\ref{JUMP}) do not apply for $t=\max\widehat{T}$. In particular, if $T=\N$, then $\Phi:\N\to X$ is a ghost orbit of $f$ if and only if, it is the orbit of a point of  ${\widetilde X}$, or, it is an orbit of a point of $\Delta$ obtained by successively applying  continuous extensions of $f$ and such that for all $\epsilon>0$ there exists a point of ${\widetilde X}$ which orbit remains at distance smaller than $\epsilon$ of all its points (as the ghost orbit (\ref{ghostperiodic}) of Example \ref{ExampleFinito}). 
The forthcoming Examples \ref{OpenExample2left} and \ref{exampleUncountable1} require ghost orbits whose
time set is $\N\times\N$ endowed with the lexicographic order.

In the following, we are interested in ghost orbits with values in the attractor of piecewise contracting maps. Note first that if $\Phi:T\to X$ is a ghost orbit, then for any $t'\in T$, the map 
$\Phi_{t'}$ defined for all $t\in T$ by $\Phi_{t'}(t)=\Phi(t'+t)$, where $t$ and $t'$ are seen as ordinal numbers and $+$ is the ordinal addition, is a ghost orbit as well. We say that $x\in{\widetilde X}$ belongs to the \em basin of attraction \em of a ghost orbit $\Phi:T\to\Lambda$, if for all $\epsilon>0$ there exists $t'\in T$ and $k\in\N$ such that $f^k(x)$ is $\epsilon$-close to $\Phi_{t'}$.
We say that a ghost orbit $\Phi:T\to\Lambda$ is \em stable\em, if there exists $\epsilon_0>0$ such that for all $\epsilon<\epsilon_0$ there is an open set $U\subset B(\Phi(\tau_0),\epsilon_0)$ such that the orbit of any point of $U\cap{\widetilde X}$ is $\epsilon$-close to $\Phi$. 
A ghost orbit is said to be \em unstable \em if it is not stable and \em repelling \em if the only points with orbit $\epsilon$-close to $\Phi$ belong to $\Phi(T)$.

\begin{example} \label{OpenExample2left} \em Consider the same space $X$ and open pieces $X_1$ and $X_2$ as in Example  \ref{OpenExample2right}. Let $\lambda\in(0,1)$ and define on $\overline{X_1}$ the map $f_1(x,y)=\lambda (x,y) $ and on $\overline{X_2}$ the map
$ f_2(x,y)= \lambda (-x, y) +(1,0) $ (see left frame of Figure \ref{openexample2}). Suppose $f : X \to X$  such that $f|_{X_i}=f_i|_{X_i}$ for all $i\in\{1,2\}$. Then, one can show that
\[
L =  \{(0,0)\} \bigcup {\{(\lambda^n,0) : n \in \N\}} = \Lambda. 
\]
More precisely, for any $(x,y)\in{\widetilde X}$ we have $\omega((x,y))=\{(0,0)\}$ if $y=0$,
and $\omega((x,y))=L$ if $y>0$.

\begin{figure}[!ht]
\begin{center}
\resizebox{13cm}{!}{\includegraphics{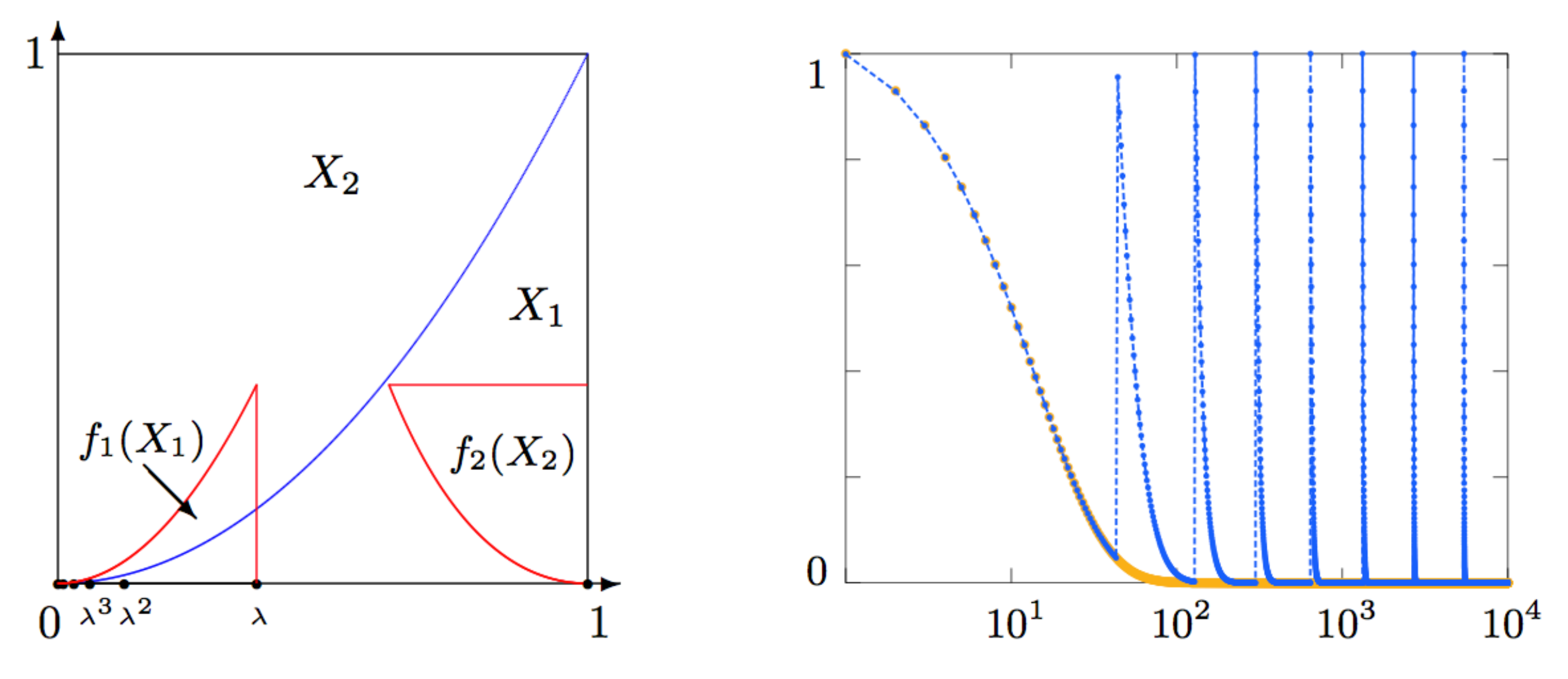}}
\caption{
Piecewise contracting map with an attractor described by a ghost orbit with time set of order type $\boldsymbol{\omega}^2$ (Example \ref{OpenExample2left}).
Left frame: the contraction pieces of $f$ and their images with  points of $\Lambda$.
Right frame: the $x$ component of $f^k(1,0.9)$ (blue) and of $f^k(1,0)$ (orange) for $k \leq 10^4$, with $\lambda = 0.93$ (lin-log plot).}
\label{openexample2}
\end{center}
\end{figure}

\end{example}

\noindent In Example \ref{OpenExample2left}, if $f(0,0)\in\{f_1(0,0),f_2(0,0)\}$ then the attractor is invariant. Also, if we suppose $f(0,0)=f_1(0,0)=(0,0)$ then the attractor can be written as $\Lambda=\{f^n(0,0)\}_{n\in\N}\cup\{f^n(1,0)\}_{n\in\N}$. Any point $(x,y)\in{\widetilde X}$ with $y=0$ has the point $(0,0)$ as $\omega$-limit set, however $(0,0)$ is not stable, since any perturbation of $y$ changes the $\omega$-limit set
of $(x,y)$. Also, although being contained in a contraction piece, $\{f^n(1,0)\}_{n\in\N}$ is a repelling orbit: for any neighborhood $U$ of $(1,0)$, there exists $\epsilon>0$ such that for all $(x,y)\in{\widetilde X}\cap U$ with $y\neq 0$, we have $d(f^{n}(x,y),f^{n}(1,0))>\epsilon$ for some $n\in\N$. 

Actually, the orbits of the points of ${\widetilde X}$ with $y\neq 0$ exhibit a more complicated (stable) asymptotic behaviour than the orbit of the points $(0,0)$ and $(1,0)$. As we can see in the right frame of Figure \ref{openexample2}, and as we are going to show
in the proof of Proposition \ref{GHN2}, after a transient time the orbit of a point $(x,y)\in{\widetilde X}$ with $y\neq 0$ get close to the point $(1,0)$, follows the orbit of this point during a finite time before going back closer to $(1,0)$, and so on.  
There is no way to define the map in $(0,0)$ in order to create an orbit in the attractor with such a recurrent  behaviour. However, we can describe this asymptotic dynamics using a ghost orbit. More precisely, we have the following proposition:

\begin{Proposition}\label{GHN2}
Let $\Phi:\N\times\N\to\Lambda$ be defined by $\Phi(n,k)=f^k(1,0)$ for all $n$ and $k$ in $\N$, where $f$ is a piecewise contracting map of Example \ref{OpenExample2left}.
If $\N\times\N$ is endowed with the lexicographic order, then $\Phi$ is a stable ghost orbit whose basin of attraction are all the points $(x,y)\in\widetilde{X}$ with $y\neq 0$.
If $y=0$ then $(x,y)$ belongs to the basin of attraction of the fixed point $\{f^k_1(0,0)\}_{k\in\N}$, which is an unstable (but not repelling) ghost orbit. 
\end{Proposition}

\begin{proof} If $\N\times\N$ is endowed with the lexicographic order, then $\widehat{T}=\{\tau_n\}_{n\in\N}$ where $\tau_n=(n,0)$ for all $n\in\N$. Also $s^k(\tau_n)=(n,k)$ and $\hat{s}(\tau_n)=\tau_{n+1}$ for all $n$ and $k\in\N$. It is easy to check that $\Phi$ verifies $1)$ and $2)$ of Definition \ref{RemarkGhost}. Now, we show that there exists $\epsilon_0>0$ such that for all $\epsilon<\epsilon_0$ there is an open set $U\subset B(\Phi(\tau_0),\epsilon_0)$ such that  the orbit of any point of $U\cap{\widetilde X}$ is $\epsilon$-close to $\Phi$. This will end to prove that $\Phi$ is a ghost orbit and will show in the same time that it is stable.
For sake of simplicity, and without loss of generality, we make the proof supposing $\lambda<(3-\sqrt{5})/2$ in order to have $f(X_2)\subset X_1$ (as in Figure \ref{openexample2}).

Let $V:=X_1\setminus\{(x,y):y=0\}$ and take $(x,y)\in V\cap{\widetilde X}$. Since $y\neq 0$, the orbit of $(x,y)$ visits $X_2$ infinitely many times, and since $f(X_2)\subset X_1$, it does not stay more that one time step in $X_2$. In other words, the sequence $\{l_n\}_{n\in\N}$ defined by 
\[
l_0=0\quad\text{and}\quad l_{n+1}=\min\{l>l_n:f^l(x,y)\in X_2\}\quad\forall n\in\N
\]  
exists and is such that $f^{l_n+1}(x,y)\in X_1$ for all $n\geq 1$. Now, consider the sequence $\{k_n\}_{n\in\N}$ defined by 
$k_0=0$ and $k_n=l_n+1$ for all $n\geq 1$. This is an increasing sequence which gives the first return times of the orbit of $(x,y)$ in $X_1$. Also the map $k:\widehat{T}\cup\{+\infty\}\to\N\cup\{+\infty\}$ of  Definition \ref{RemarkGhost} will be defined as $k(\tau_n)=k_n$ for all $n\in\N$.

For any $n\in\N$, let $d_n(x,y):=d(f^{k_n}(x,y),(1,0))$, where $d$ denotes the distance induced by the infinite norm of $\R^2$. Then, for all $n\in\N$, we have
\begin{equation*}\label{PROCHE1}
d(f^k(x,y),f^{k-k_n}(1,0))\leq \lambda^{k-k_n} d_n(x,y)\qquad\forall k\in[k_n,k_{n+1}).
\end{equation*}
Now, using the fact that $f^{k_1-1}(x,y)\in X_2$, one can prove that $d_1(x,y)<d_0(x,y)$
if $(x,y)\in B((1,0),1-\lambda)$. This implies that $d_1(f^{k_{n}}(x,y))<d_0(f^{k_{n}}(x,y))$ for all $n\in\N$, or equivalently  
$d_{n+1}(x,y)<d_n(x,y)$ for all $n\in\N$. It follows that 
\begin{equation}\label{PROCHE}
d(f^k(x,y),f^{k-k_n}(1,0))\leq \lambda^{k-k_n} d_0(x,y)\qquad\forall k\in[k_n,k_{n+1})\quad\forall n\in\N, 
\end{equation}
if $(x,y)\in B((1,0),1-\lambda)$.

On the other hand, for all $n\in\N$, if $d_{n+1}(x,y)<(1-\lambda)/2$, then  $d(f^{k_{n+1}}(x,y),f^{k_{n+1}-k_n}(1,0))>(1-\lambda)/2$. It follows that 
\begin{equation}\label{PASPROCHE}
d(f^{k_{n+1}}(x,y),f^{k_{n+1}-k_n}(1,0))>(1-\lambda)/2\quad\forall\, n\in\N,
\end{equation}
if $(x,y)\in B((1,0), (1-\lambda)/2)$, since $d_{n+1}(x,y)<d_{0}(x,y)$ for all $n\in\N$, if $(x,y)\in B((1,0), 1-\lambda)$.

Let $\epsilon_0=(1-\lambda)/2$, $\epsilon<\epsilon_0$ and $U=V\cap B((1,0),\epsilon)$. Then by inequalities (\ref{PROCHE}) and (\ref{PASPROCHE}) any point $(x,y)\in U\cap{\widetilde X}$ is $\epsilon$-close to $\Phi$, and therefore $\Phi$ is a stable ghost orbit. On the other hand, any point $(x,y)\in{\widetilde X}$ with $y>0$ eventually enter in $U$, since $(1,0)\in\omega((x,y))$. These points belong thus to the basin of attraction of $\Phi$. Finally, it is easy to verify that the fixed point $\{f^k_1(0,0)\}_{k\in\N}$ is an unstable ghost orbit, which basin of attraction are all the point $(x,0)$ with $x\neq 0$ and thus is not repelling. 
\end{proof}

For continuous maps, a point is recurrent if it belongs to its $\omega$-limit set.
Following this definition, the attractor of Example \ref{OpenExample2left} does not have recurrent points (except for the point $(0,0)$ in the special case where $f(0,0)=(0,0)$).
In the case of Example \ref{ExampleFinito}, although the dynamics in the attractor being described by a periodic sequence, following the same definition, the attractor contains points that are not recurrent
(these points depend on the definition of $f$ in (0,0)). Once introduced the concept of ghost orbit we can propose a generalized definition of recurrence and of transitivity. 

\begin{definition}\label{definitionRecurrent}{\bf Recurrent point.} \em A point $x\in\Lambda$ is {\it recurrent} if there exists a ghost orbit $\Phi:T\to X$ and $t\in T$ such that $x=\Phi(t)$ and $x\in\overline{\{\Phi(s):s>t'\}}$ for all $t'>t$.\em  
\end{definition}

\noindent Note that if $x=\Phi(t)$ is recurrent then any $y=\Phi(t')$ for a $t>t'$ is also recurrent. Under this definition, all the points of the attractor of Examples \ref{ExampleFinito} and \ref{OpenExample2left} are recurrent.

\begin{definition}\label{definitionTransitive}
{\bf Transitivity.} \em  A compact set $Y\subset\Lambda$ is {\it transitive} if there exists a ghost orbit $\Phi:T\to\Lambda$ such that $Y=\overline{\Phi(T)}$. \em
\end{definition}

\noindent Under this definition, the attractors of Example \ref{ExampleFinito} and \ref{OpenExample2left} are transitive.
The respective attractors of the Example \ref{OpenExample2right} and Example \ref{ejemploNuevoFebrero2011} are also transitive.
The dynamics on the respective attractors is described by the same ghost orbits: the orbit of the unstable fixed point $(0,0)$, the orbit of the stable fixed point $(0,1)$ and the stable ghost orbit $\Phi:\{0,1\}\times\N$ defined by $\Phi(0,k)=(0,0)$ and $\Phi(1,k)=(0,1-\lambda^{k+1})$ for all $k\in\N$. However, the fixed points are the only recurrent points.

We have presented countable attractors so far, but it is known that piecewise contractive maps can exhibit Cantor limit sets. Also, it has been reported that the limit set of some piecewise contracting maps of the plane can be the disjoint union $L = S \cup K $ of a cantor set $K$, supporting a minimal dynamics, with a finite set $S$ of periodic points  \cite{CFLM06}. Such limit sets are decomposable, since they have at least two transitive components. In the following Example \ref{exampleUncountable1}, we show that there exist transitive limit sets which are a disjoint union of the form $S \cup K$ where $S$ is countable. 
This example illustrates also a possible effect of the discontinuities on the characteristics of the recurrent points. Indeed, all the points of $S$ are isolated (non-periodic) recurrent points, whereas for continuous maps such points are necessarily periodic. On the other hand, for a continuous map defined on a compact set $X$, if a point $x\in X$ satisfies $\omega(x)=S\cup K$, where $K$ is a minimal Cantor set and $S$ is a scattered subset of $\omega(x)$, then $S$ is dense in $\omega(x)$ \cite{GRS09}. Example \ref{exampleUncountable1} shows also that this result does not hold for piecewise contracting maps.

\begin{example} \label{exampleUncountable1}\em 
Let $I = [0,1]$, $b \in (0,1)$, $I_1=(b,1)$ and $I_2 =(0,b)$.
Let $\alpha \in (0,1)$ and define on $\overline{I_1}$ the map $g_1 = \alpha (x-b) $ and on $\overline{I_2}$ the map $g_2 = \alpha (x-b) + 1$.
Consider a map $g : I \to I$ such that $g|_{I_i}=g_i|_{I_i}$ for any $i\in\{1,2\}$.
Then, there exist uncountably many $(\alpha,b)$  for which the limit set of $g$ is a minimal Cantor set $K$, and this Cantor set satisfies $\min K=0$ (see \cite[Theorem 6.1]{CFLM06} for an explicit formula to compute such $\alpha$ and $b$).

Let $X=[-1,1]\times[0,1]$, $\phi: [0,b] \to [0,1]$ be some continuous and increasing function, and consider the four open pieces: $X_1 \,=\, \{ (x,y)\in X: b < x \}$, $X_2 \,=\, \{ (x,y)\in X: \, x \in (0,b), \, y < \phi(x) \}$, $X_4 \,=\, \{ (x,y)\in X: \, x \in [-1,0], \, y > (x+1)^2 \}$ and $X_3 \,=[-1,b]\times[0,1]\setminus\overline{(X_4\cup X_2)}$
(see left frame of Figure \ref{Uncountableexample}).

Let $f_1:[0,1]\times[0,1]$ be defined by $f_1(x,y)=(g(x),\lambda y)$, and $f_3:\overline{X_3}\to X$ be defined by $f_3(x,y) =\lambda(x+1,y)+(-1,0)$, where $\lambda\in(0,1)$.

\begin{figure}[!ht]
\begin{center}
\resizebox{15cm}{!}{\includegraphics{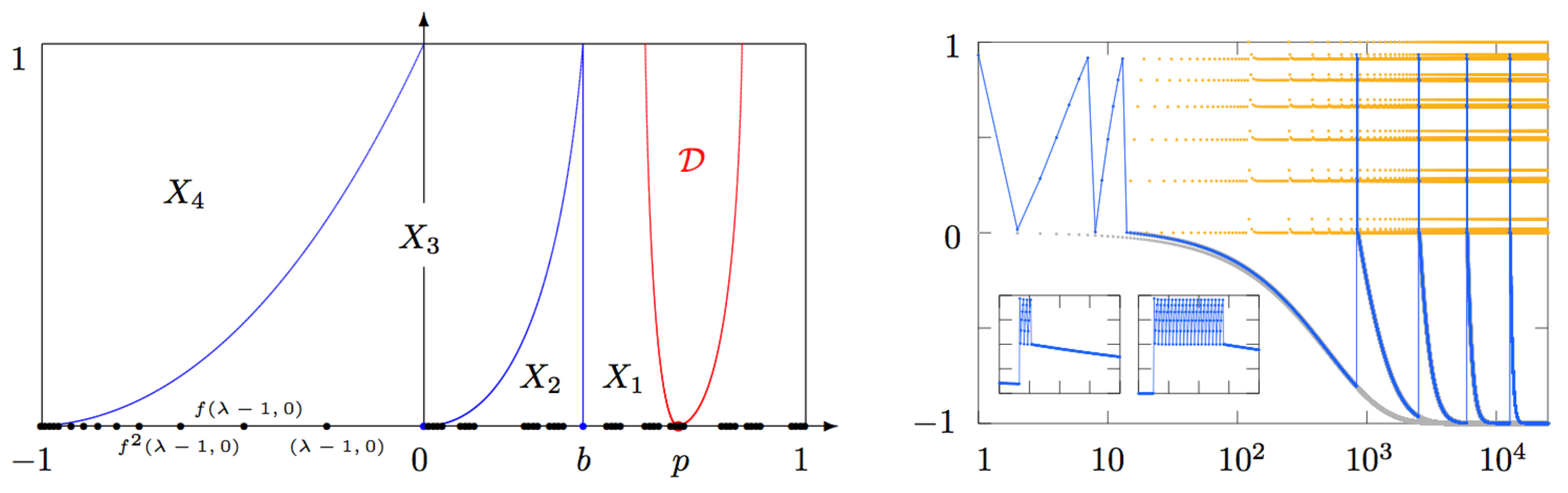}}
\caption{Piecewise contracting map with a transitive limit set $L$, which is the disjoint union of a countably infinite set $S$ with a Cantor set $K$ (Example \ref{exampleUncountable1}).
Left frame: the contraction pieces of $f$ with  points of $L$.
Right frame: the $x$ component of $f^k(0.5,0.9)$ (blue), of $f^k(p,0)$ (orange), and of $f^k(\lambda-1,0)$ (grey) for $k \leq 25 \times 10^4$, with $\lambda = 0.998$, $\alpha = 0.8$, $b \sim 0.911$ and $p \sim 0.934$ (lin-log plot).
The two inner frames, both over a window of 200 time units, are enlargements on the dynamics following the first (leftmost) and fourth (rightmost) return times to $X_1 \cup X_2$ (lin-lin plots).}
\label{Uncountableexample}
\end{center}
\end{figure}

\begin{Proposition}\label{CANTORBEN} There exist a continuous and increasing function $\phi: [0,b] \to [0,1]$, an open set $\mathcal{D}\subset X_1$, and a contracting homeomorphism $f_4:\overline{X_4}\to f_4(\overline{X_4})$, such that for any map $f:X\to X$ satisfying $f|_{X_1} = f_1|_{X_1}$, $f|_{X_2} = f_1|_{X_2}$ and $f|_{X_i} = f_i|_{X_i}$ for any $i \in \{3, 4\}$, the set $\widetilde{X}$ is dense, the limit set $L$ of $f$ satisfies   
\[
L=S\cup K\quad\text{where}\quad S=\overline{\{f^n(\lambda-1,0)\}}_{n\in\N},
\]
and $\omega(x,y)=L$ for any $(x,y)\in\widetilde{X}\cap(X_3\cup X_4\cup\mathcal{D})$. Moreover, $L$ is transitive and all
its points are recurrent.
\end{Proposition}

\noindent The proof of Proposition \ref{CANTORBEN} is given in Section \ref{PROOFCANTOR}. The proof also shows that any point $(x,y)\in\widetilde{X}$ with $y\neq 0$ and whose orbit visits $\mathcal{D}\cup X_3\cup X_4$ is in the basin of attraction
of the stable ghost orbit $\Phi: \mathbb{N} \times \mathbb{N} \to \Lambda$ defined by $\Phi(2n,k) = f^k(\lambda-1,0)$ and $\Phi(2n+1,k) = f^k(p,0)$ for all $n$ and $k\in\N$, where $p$ has an orbit by $f$ which is dense in $K$.
The right frame of Figure \ref{Uncountableexample} shows a numerical simulations of the orbit of $(p,0)$ and 
of the orbit of a point of $\widetilde{X}$ in the basin of attraction of $\Phi$. This orbit accumulates on $S\cup K$ by getting every time closer to the orbit of $(\lambda-1,0)$ and of $(p,0)$. 
\end{example}

\section{Disconnectedness and complexity}\label{SECCOMP}

All the attractors we have encountered until now were totally disconnected.
In this section, we give general conditions ensuring the attractor to have this property, but we also provide examples of piecewise contracting maps with a connected attractor.
We will see that the existence of a connected attractor needs a sufficiently fast growth of the complexity  
to counter-balance its contracting characteristic.
However, as shown by the following theorem, in $\R^n$ and under a reasonable hypothesis on the set of the discontinuity points, the Lebesgue measure of the attractor remains always null.  

\begin{theorem}\label{TheoremMedidaNula} Let $X$ be a compact subset of $\R^n$ and denote by $l_n$ the $n$-dimensional Lebesgue measure. If $f:X\to X$ is a piecewise contracting map such that $l_n(\Delta)=0$, then $l_n(\Lambda)= 0$.
\end{theorem}

\noindent We prove Theorem \ref{TheoremMedidaNula} in Section \ref{proofthmedida}. The condition that the Lebesgue measure of the set $\Delta$ is null may possibly be loosen. But the measurability of this set seems to be important in order to avoid paradoxical decompositions that can lead to the expansion of the Lebesgue measure by a piecewise contracting map \cite{L92}.

Theorem \ref{TheoremMedidaNula} has an immediate consequence on the connectedness of the attractor of a piecewise contracting map defined on a compact subset of $\R$: If the set of the discontinuity points is countable, then the attractor is totally disconnected. Indeed, if $\Delta$ is countable then $l_n(\Delta)= 0$ and it follows from Theorem \ref{TheoremMedidaNula} that $l_n(\Lambda)= 0$. Therefore $\Lambda$ has empty interior, which in $\R$ implies total disconnectedness. In higher dimension or in general metric spaces, the Lebesgue measure (when it exists) does not give information about the connectedness of the attractor. The following theorem gives sufficient conditions for the total disconnectedness of the attractor in compact metric spaces.

\begin{theorem}\label{DISCO} \label{teoremaTotalmenteDesconexo} If a piecewise contracting map  satisfies  at least one of the following hypothesis:

{\em \noindent 1)} there exists $n_0\geq 1$ such that for all $n\geq n_0$, if
$A\cap B\neq\emptyset$ and $A,B\in\mathcal{A}_n$ then $A=B$,

{\em \noindent 2)} the number of atoms of generation $n$, $\# {\mathcal A}_n$, and the contraction rate $\lambda$ satisfy $\lim\limits_{n\to\infty} \# \mathcal{A}_n \, \lambda^n = 0$,

\noindent then its attractor is totally disconnected.
\end{theorem}

\noindent
We prove Theorem \ref{teoremaTotalmenteDesconexo} in Section \ref{proofTheoremTotallyDisconnected}. We have referred earlier to condition 1) as separation property and it is satisfied by any map whose continuous extensions are injective. As a side comment, we mention that similar conditions to 2) allow to obtain an upper bound for the Hausdorff dimension of the attractor (see Proposition \ref{DIMHAUS}). Note that if $N<\lambda^{-1}$, the condition 2) is satisfied, i.e. strongly contracting maps have totally disconnected attractors. For some conformal piecewise contracting maps with polytope pieces, the number of atoms of generation $n$ is a sub-exponential
function of $n$ \cite{KR06b}. This function  is polynomial in some cases where the contraction rate is sufficiently small \cite{LU06}. It implies that these maps always satisfy the condition $2)$ and thus have totally disconnected attractors. As shown by the following example
inspired by \cite{KR06, KR06b}, non-conformal piecewise contracting maps do not always satisfies condition 2) and can exhibit a connected attractor. This example also permits to discuss the optimality of the hypothesis of Theorem \ref{teoremaTotalmenteDesconexo}. 

\begin{example}\label{exampleSegmentoDic2011}\em
Consider in $\mathbb{R}^2$ the compact triangle $T$ with corners in $(0,0)$, $(1,0)$ and $(0,1)$.
We denote by $T_1$ and $T_2$ the two half open triangles, respectively bellow and above the line $x= y$
(here $\Delta_{g}=\{(x,y)\in T : x=y\}$). 
Let $\alpha\in(0,1/2)$. Consider the affinities $g_1$ and $g_2$ defined for all $(x,y)\in T$ by 
$g_1(x,y)=\alpha(x-y,2y)$ and $g_2(x,y)=\alpha(2x,y-x)$. Then, $g_1$ and $g_2$ map respectively $T_1$ and $T_2$ to $\alpha T$ and are such that $g_1(0,0)=g_2(0,0)=(0,0)$, $g_1(1,0)=(\alpha,0)$ and
$g_2(0, 1) = (0, \alpha)$, see Figure \ref{Figure1ExampleCantorDic2011}. Note that any map $g: T \to T$ such that $g|_{T_1} = g_1|_{T_1}$ and $g|_{T_2 } = g_2 |_{T_2 }$ is piecewise contracting, is non conformal and has attractor $\Lambda_{g}=\bigcap_{n\in\N}\alpha^nT=\{(0,0)\}$.

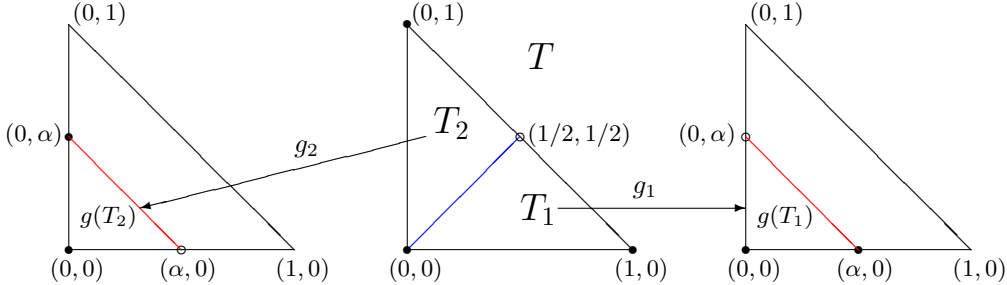
\begin{figure}[h]
\setlength{\unitlength}{0.5truecm}
\begin{center}
\begin{picture}(20,8)(0,0)
\put(-2,1){\line(0,1){6}}
\put(7,1){\line(0,1){6}}
\put(-2,7){\line(1,-1){6}}
\put(-1.7,1.7){\small $g(T_2)$}
\put(-2,1){\circle*{.2}}
\put(-2.5,.3){\small $(0,0)$}
\put(3.5,.3){\small $(1,0)$}
\put(-1.9,7.1){\small $(0,1)$}
\put(-2,4){\circle*{.2}}
\put(-3.7,3.9){\small $(0,\alpha)$}
\put(1,1){\circle{.2}}
\put(.4,.3){\small $(\alpha,0)$}

\put(-2,4){\color[rgb]{1,0,0}\line(1,-1){3}}
\put(7,1){\line(1,0){6}}
\put(16,1){\line(1,0){6}}
\put(7,7){\line(1,-1){6}}
\put(7,1){\color[rgb]{0,0,1}\line(1,1){3}} 
\put(10.2,5.8){\Large $T$}
\put(10,1.8){\Large $T_1$}
\put(7.7,4.1){\Large $T_2$}
\put(7,1){\circle*{.2}}
\put(6.5,.3){\small $(0,0)$}
\put(13,1){\circle*{.2}}
\put(12.5,.3){\small $(1,0)$}
\put(7,7){\circle*{.2}}
\put(7.1,7.1){\small $(0,1)$}
\put(10,4){\circle{.2}}
\put(10.2,3.9){\small $(1/2, 1/2)$}

\put(7.5,4){\Large \vector(-4,-1){7.6}}
\put(4,3.6){$g_2$}
\put(11,2.1){\vector(4,0){5}}
\put(13,2.5){$g_1$}

\put(16,1){\line(0,1){6}}
\put(-2,1){\line(1,0){6}}
\put(16,7){\line(1,-1){6}}
\put(16.3,1.7){\small $g(T_1)$} 
\put(16,1){\circle*{.2}}
\put(15.5,.3){\small $(0,0)$}
\put(21.5,.3){\small $(1,0)$}
\put(16.1,7.1){\small $(0,1)$}
\put(16,4){\circle{.2}}
\put(14.2,3.9){\small $(0,\alpha)$}
\put(19,1){\circle*{.2}}
\put(18.4,.3){\small $(\alpha,0)$}

\put(16,4){\color[rgb]{1,0,0}\line(1,-1){3}}
\end{picture}

\caption{
The non-conformal piecewise affine contracting map $g$ on the triangle $T$ expands the angles at the corner $(0,0)$ and has attractor $\{(0,0)\}$ (Example \ref{exampleSegmentoDic2011}).}
\label{Figure1ExampleCantorDic2011}
\end{center}
\end{figure}

Now we add a dimension by  considering the compact polyhedra $X= T \times[0,1]$. Denote $X_1$  and $X_2$  the two open pieces $T_1 \times [0,1]$ and $T_2 \times [0,1]$, respectively (here $\Delta=\{(x,y,z)\in X : x=y\}$).
Choose two real numbers $\lambda_1, \lambda_2 \in(0,1)$ and let
$\varphi_1$ and $\varphi_2$ be defined for all $z\in[0,1]$ by $\varphi_1(z)=\lambda_1z$ and $\varphi_2(z)=\lambda_2(z-1) + 1$.

Let $f: X \to X$ be a piecewise contracting map such that $f|_{X_i}= f_i|_{X_i}$  for all $i\in\{ 1, 2\}$, where $f_i(x,y,z)=(g_i(x, y), \varphi_i(z))$ for all $(x,y,z)\in X$. In other words, 
\begin{equation}\label{equationF}
f(x, y, z) =\left\{
\begin{array}{ccc}
(g_1(x, y), \varphi_1(z))&\text{ if }& (x,y)\in T_1\\
(g_2(x, y), \varphi_2(z))&\text{ if }& (x,y)\in T_2
\end{array}
\right.
\end{equation}
for all $(x,y,z)\in X_1\cup X_2$ (see Figure \ref{Figure2ExampleCantorDic2011}).

For all $i\in\{1,2\}^\N$ denote $\varphi^n_i:=\varphi_{i_n}\circ\varphi_{i_{n-1}}\circ\dots\circ\varphi_{i_1}$. Then, one can check that for all $i\in\{1,2\}^\N$ and $n\geq 1$, we have 
\[
A_{i_1i_2\dots i_n}=\alpha^nT\times\varphi^n_{i}([0,1]).
\]
It follows that 
\[
\Lambda=\{(0,0)\}\times\Lambda_{\varphi},
\qquad
\text{where}
\qquad
\Lambda_{\varphi}:=\bigcap_{n\in\N}\bigcup_{i\in{\{1,2\}}^\N}\varphi^n_{i}([0,1]).
\]
If we denote by $\varphi:=\{\varphi_1,\varphi_2\}$ the iterated functions system defined by $\varphi_1$ and $\varphi_2$, then $\Lambda_{\varphi}$ is the unique non-empty compact set such that
\[
\Lambda_{\varphi}=\varphi_1(\Lambda_{\varphi})\cup\varphi_2(\Lambda_{\varphi})
\]
and is the attractor of $\varphi$ (see \cite{FAL}, Theorem 9.1). 

The set $\Lambda_{\varphi}$ (and therefore $\Lambda$) is totally disconnected if and only if $\lambda_1 + \lambda_2 < 1$. Actually if  $\lambda_1 + \lambda_2 < 1$ the set $\Lambda_{\varphi}$ is a Cantor set, and if $\lambda_1 + \lambda_2 \geq 1$ it is the interval $[0,1]$. It follows that, in this example, the condition 1) of Theorem \ref{DISCO} is a necessary and sufficient condition. On the other hand, if $\lambda=\max\{\lambda_1,\lambda_2\}>1/2$, the attractor can be totally disconnected and $\lim\limits_{n\to\infty}\#\mathcal{A}_n\lambda^n=\infty$, since $\#\mathcal{A}_n=2^n$, that is, condition 2) is not a necessary condition. But, when the condition 2) does not hold, the attractor can be connected (take $\lambda_1=\lambda_2=1/2$). We note also, that condition 1) does not imply condition 2) in general.

\begin{figure}[!ht]
\begin{center}
\includegraphics{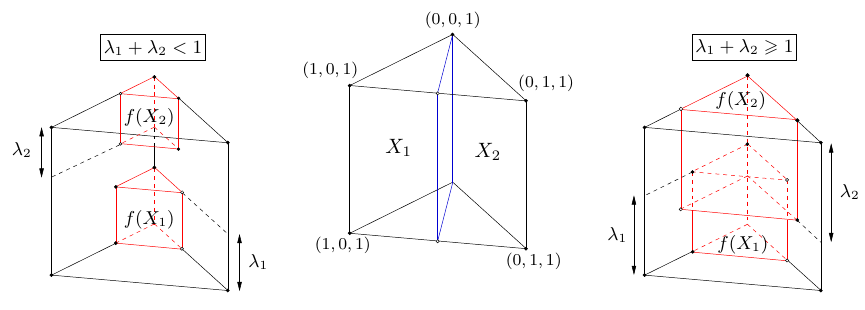}
\caption{
At the centre, the two pieces of contraction (Example \ref{exampleSegmentoDic2011}).
At the left, a piecewise affine contracting map which attractor is a Cantor set.
At the right, a piecewise affine contracting map which attractor is a connected interval.
The projection on the horizontal plane $z = 0$ is the planar piecewise affinity $g$ of Figure \ref{Figure1ExampleCantorDic2011}.}
\label{Figure2ExampleCantorDic2011}
\end{center}
\end{figure}

\em
\end{example}

The following proposition describes in larger details the dynamics of the map of Example \ref{exampleSegmentoDic2011}.

\begin{Proposition}\label{ENTROP} Let $f$ be a piecewise contracting map of Example \ref{exampleSegmentoDic2011}. Then,

\noindent 1) The map $f|_{\widetilde{X}}$ is semi-conjugate to
the shift map $\sigma:\{1,2\}^{\N}\to\{1,2\}^{\N}$ restricted to the set 
\[
\Sigma:=\{i\in{\{1,2\}}^\N : \sigma^k(i)\notin\{1^\infty,2^\infty\}, \forall\, k\geq 1\}\cup\{1^\infty,2^\infty\}. 
\]

\noindent 2) Let $i\in\Sigma$ and $\bar{z}\in[0,1]$ such that $(0,0,\bar{z})\in\Lambda$. Then, the sequence $\{(0,0,\varphi_i^n(\bar{z}))\}_{n\in\N}$ is a ghost orbit of $f$, and any $(x,y,z)\in\widetilde{X}$ such that  $f^n(x,y,z)\in X_{i_{n+1}}$ for all $n\in\N$ belongs to its  basin of attraction.  

\noindent 3) Let $k_0:=\min\{k\in\N:1-\lambda_1^{k}-\lambda_2^{k}>0\}$, then topological entropy of $f|_{\tilde{X}}$ satisfies $h_{top}\geq\frac{\log(2)}{k_0}$. 
\end{Proposition}

\noindent Before proving the proposition we make a comment about point 3). Here, topological entropy refers to the quantity
\[
h_{top}:=\lim_{\epsilon\to 0}\limsup_{n\to\infty}\frac{\log{r(n,\epsilon)}}{n},
\]
where $r(n,\epsilon)$ is the cardinality of largest $(n,\epsilon)$-separated set of $f$ (see \cite{ROB}).
Since all the continuous extensions of $f$ are homeomorphisms, the set $\widetilde{X}$ is dense in $X$ and therefore is not compact. So, although $f$ being continuous on $\widetilde{X}$, we cannot apply directly the classical theorems for continuous maps on a compact set to deduce from 1) that $f|_{\widetilde{X}}$ has  topological entropy greater or equal to that of $(\Sigma,\sigma)$ (that is $\log(2)$). Actually, the map $g|_{\widetilde{T}}$ is also continuous and semi-conjugated to $(\Sigma,\sigma)$. However, all its orbits converge to the point $(0,0)$ and the cardinality of the $(n,\epsilon)$-separated set of $g$ does not increase exponentially with $n$, and therefore $h_{top}=0$ for $g$. This is why the proof of 3) does not rely directly on 1), but consists in using 2) to show that for $f|_{\widetilde{X}}$ the quantity $r(n,\epsilon)$ increases exponentially with $n$.

\begin{proof} 1) We first describe the symbolic dynamics of $g$ on the set ${\widetilde T}:=\bigcap _{n= 0}^{+ \infty} g^{-n}(T\setminus\Delta_{g})$. Let $i:{\widetilde T}\to {\{1,2\}}^\N$ be the map which associates
to each point $(x,y)\in{\widetilde T}$ the sequence $\{i_p(x,y)\}_{p\in\N}$ such that 
$i_p(x,y)=1$ if $g^p(x,y)\in T_1$ and $i_p(x,y)=2$ if $g^p(x,y)\in T_2$ for all $p\in\N$.
Let $Y:=(0,1]\times[0,1]$, and $r:Y\to Y$ such that
$r(u,v)=(\alpha u,s(v))$ for all $(u,v)\in Y$, where $s(v)=2v$ if $v\leq 1/2$
and $s(v)=2v-1$ if $v> 1/2$ (in $(0,1]$ the map $s$ coincides with $2v\mod 1$).
Consider the sets $\Delta_{r}:=\{(u,v)\in Y : u= 0 \text{ or } v=1/2\}$ and ${\widetilde Y}:=\bigcap _{n= 0}^{+ \infty} r^{-n}(Y\setminus\Delta_{r})$. Then, $g|_{{\widetilde T}}$ and $r|_{{\widetilde Y}}$ are topologically conjugated. Indeed, one can check that the map $h:{\widetilde T}\to{\widetilde Y}$ defined by $h(x,y)=(x+y,y/(x+y))$ is a homeomorphism and satisfies $r|_{{\widetilde Y}}\circ h=h\circ g|_{{\widetilde T}}$.

Now let $Y_1:=(0,1]\times[0,1/2)$ and $Y_2:=(0,1]\times(1/2,1]$ and consider the map $j:{\widetilde Y}\to {\{1,2\}}^\N$ which associates to each point $(u,v)\in{\widetilde Y}$ the sequence $\{j_p(u,v)\}_{p\in\N}$ such that $j_p(u,v)=1$ if $r^p(u,v)\in Y_1$ and $j_p(u,v)=2$ if $r^p(u,v)\in Y_2$ for all $p\in\N$. Since the itinerary $j(u,v)$ of any point $(u,v)\in{\widetilde Y}$ is given by the code of $v$ under the map $s$, we have $j(\widetilde Y)=\Sigma$. Using the conjugacy between $g|_{{\widetilde T}}$ and $r|_{{\widetilde Y}}$, one can check that $i(x,y)=j\circ h(x,y)$ for all $(x,y)\in{\widetilde T}$. It follows that $i({\widetilde T})=\Sigma$, since $h({\widetilde T})={\widetilde Y}$. The map $i$ is thus onto (but not one to one) and continuous (by continuity of $r|_{\widetilde Y}$)  and therefore realizes a semi-conjugacy from $g|_{\widetilde T}$ to $\sigma|_{\Sigma}$. Now, since a point $(x,y,z)\in\widetilde{X}$ if and only if $(x,y)\in\widetilde{T}$, from relation \eqref{equationF} we deduce that $f|_{\widetilde{X}}$ is semi-conjugated to $\sigma|_{\Sigma}$ by the map which associates
to each point $(x,y,z)\in\widetilde{X}$ the sequence $i(x,y)$.

\noindent 2) Suppose $\bar{z}$ such that $(0,0,\bar{z})\in\Lambda$ and let $i\in\Sigma$. We first show that the sequence $\{(0,0,\varphi^n_{i}(\bar{z}))\}_{n\in\N}$ is a ghost orbit of $f$. Fist of all $(0,0,\varphi^n_{i}(\bar{z}))=f^n_{i_n}(0,0,\bar{z})$ for all $n\in\N$, and therefore condition 2) of Definition \ref{RemarkGhost} is satisfied and it remains only to show that  \eqref{ECLOSE} holds. Let $\epsilon>0$ and take $n_0\in\N$ such that $\text{diam } A<\epsilon$ for all $A\in\mathcal{A}_n$ and $n\geq n_0$. Let $\theta\in\{1,2\}^\N$ be such that $\bar{z}\in\varphi^{n_0}_{\theta}([0,1])$ and $\sigma^{n_0}(\theta)=i$. Then, $(0,0,\varphi^n_{i}(\bar{z}))\in A_{\theta_1\theta_2\dots\theta_{n_0+n}}$ for all $n\in\N$.  If $i\notin\{1^\infty,2^\infty\}$, the sequence $\theta$ belongs to $\Sigma$ and there exists  $(x',y',z')\in{\widetilde X}$ such that the itinerary of $(x',y')\in{\widetilde T}$ by $g|_{\widetilde T}$ is equal to $\theta$. It follows that the point $(x,y,z):=f^{n_0}(x',y',z')$ belongs to ${\widetilde X}$ and satisfies $f^n(x,y,z)\in A_{\theta_1\theta_2\dots\theta_{n_0+n}}$ for all $n\in\N$. We conclude that  $d(f^n(x,y,z),(0,0,\varphi^n_{i}(\bar{z})))<\epsilon$ for all $n\in\N$. For $i=1^\infty$ (reps. $i=2^\infty$), it is easy to show that the point $(\epsilon,0,\bar{z})$ (reps. $(0,\epsilon,\bar{z})$) belongs to $\widetilde{X}$ and is $\epsilon$-close to $\{(0,0,\varphi^n_{i}(\bar{z}))\}_{n\in\N}$.

Now we describe the basin of attraction of $\{(0,0,\varphi_{i}^{n}(\bar{z})\}_{n\in\N}$. Let $(x,y,z)\in\widetilde{X}$ be such that $f^n(x,y,z)\in X_{i_{n+1}}$ for all $n\in\N$. Then $f^{n}(x,y,z)$ and $(0,0,\varphi_{i}^n(\bar{z}))\in A_{i_1 i_2 \dots i_{n}}$ for all $n\in\N$. It follows that for any $\epsilon>0$, there exists $n_0$ such that $d(f^{n}(x,y,z),(0,0,\varphi_{i}^n(\bar{z})))<\epsilon$ for all $n\geq n_0$. In other words, $f^{n_0}(x,y,z)$ is $\epsilon$-close to the ghost orbit $\{(0,0,\varphi_{i}^{n+n_0}(\bar{z})\}_{n\in\N}$ and therefore belongs to the basin of attraction of $\{(0,0,\varphi_{i}^{n}(\bar{z})\}_{n\in\N}$.

\noindent 3) First note that 
\begin{equation}\label{SEPARPHI}
\varphi^{k}_2(z)-\varphi^{k}_1(z)\geq 1- \lambda_1^{k}-\lambda_2^{k}\qquad\forall\,z\in[0,1],\, 
k\in\N,
\end{equation}
and denote $k_0:=\min\{k\in\N:1-\lambda_1^{k}-\lambda_2^{k})>0\}$. Let $n\geq 1$, $\epsilon_0=(1-\lambda_1^{k_0}-\lambda_2^{k_0})/3$ and $\bar{z}\in[0,1]$ be such that $(0,0,\bar{z})\in\Lambda$. Now let $w\in\{1,2\}^n$ and consider a sequence
$i\in\Sigma$ such that $i_{(p-1)k_0+l}=w_p$ for all $p\in\{1,\dots,n\}$ and $l\in\{1,\dots,k_0\}$.
Since $\{(0,0,\varphi^k_i(\bar{z}))\}$ is a ghost orbit of $f$, there is a point $(x,y,z)\in\widetilde{X}$ such that $d(f^k(x,y,z),(0,0,\varphi^k_i(\bar{z})))<\epsilon_0$ for all $k\in\N$. In particular, if for all $p\in\{1,\dots,n\}$ we denote $\phi^p_w(\bar{z}):=\varphi_{w_p}^{k_0}\circ\varphi^{k_0}_{w_{p-1}}\circ\dots\circ\varphi^{k_0}_{w_1}(\bar{z})$, we have
\begin{equation}\label{XWORD}
d(f^{pk_0}(x,y,z),(0,0,\phi^p_w(\bar{z})))<\epsilon_0 \quad\forall\, p\in\{1,\dots,n\}.
\end{equation}
So we can construct a one-to-one correspondence between $\{1,2\}^n$ and a set $S_{n,\epsilon_0}\subset\widetilde{X}$ such that \eqref{XWORD} holds for any $w\in\{1,2\}^n$ and the corresponding point $(x,y,z)\in S_{n,\epsilon_0}$. On the other hand, if $w\neq w'$, there is (a smallest) $p_0\in\{1,\dots,n\}$ such that $w_p\neq w'_{p}$, and from \eqref{SEPARPHI} it follows that
\begin{equation}\label{SEPAR2}
d((0,0,\phi^{p_0}_{w}(\bar{z})),(0,0,\phi^{p_0}_{w'}(\bar{z})))>3\epsilon_0.
\end{equation}
Let $(x,y,z)$ and $(x',y',z')$ be two different points of $S_{n,\epsilon_0}$, and $w$, $w'$ be their respective corresponding word in $\{1,2\}^n$. Then, using \eqref{XWORD} and \eqref{SEPAR2} and applying the triangular inequality, we obtain that 
\[
d(f^{p_0k_0}(x,y,z),f^{p_0k_0}(x',y',z'))>\epsilon_0,
\]
for a $p_0\in\{1,\dots,n\}$. It follows that for any $n\geq 1$, the set $S_{n,\epsilon_0}$ is a $(k_0n,\epsilon_0)$-separated set for $f$, which cardinality is equal to $2^n$. Using the monotonicity of $r(n,\epsilon)$ in $\epsilon$, we conclude that

\[
h_{top}\geq\limsup_{n\to\infty}\frac{\log{r(n,\epsilon_0)}}{n}\geq\lim_{n\to\infty}\frac{\log{r(nk_0,\epsilon_0)}}{nk_0}\geq\frac{\log(2)}{k_0}.
\]
\end{proof}

\begin{figure}[here]
\begin{center}
\resizebox{15cm}{!}{\includegraphics{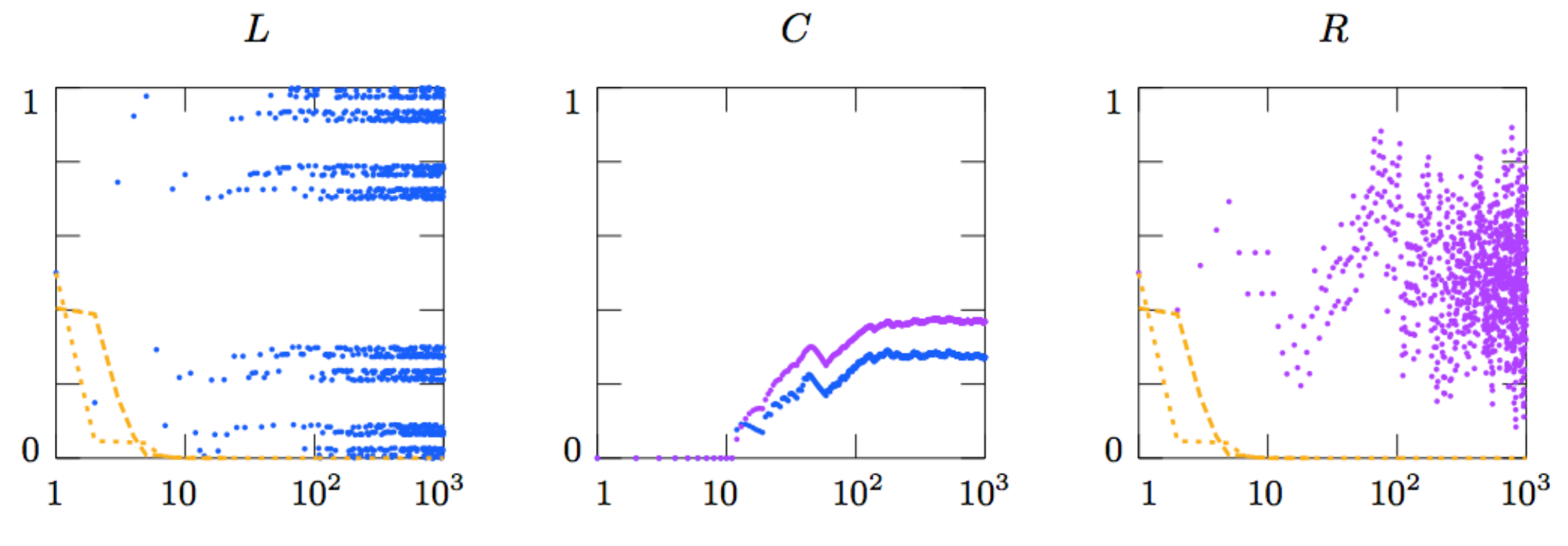}}
\caption{
Piecewise contracting map with positive topological entropy (Example \ref{exampleSegmentoDic2011}).
Left frame $L$: in blue, lin-log plot of the $z$ component of the first 1000 iterates by $f$ of a non periodic point ($x$ and $y$ components are both in orange). Parameters are $\lambda = \lambda_1 = \lambda_2 = 0.3$, and $\alpha = 0.48$ (the attractor is a Cantor set). Right frame $R$: in purple, lin-log plot of the $z$ component of the first 1000 iterates by $f$ of a non periodic point ($x$ and $y$ components are both in orange). Parameters are $\lambda = \lambda_1 = \lambda_2 = 0.8$, and $\alpha = 0.48$ (the attractor is an interval). Centre frame $C$: euclidean distance $\sum_{j=0}^k d(f^j(0.5,y,0.5),f^j(0.5,y+\epsilon,0.5))$ for $k \leq 10^3$, with $y = 0.4$ and $\epsilon \sim 10^{-4}$, for $\lambda = 0.3$ (blue) and $\lambda = 0.8$ (purple).
}
\label{openexample}
\end{center}
\end{figure}

To sum up, the map $f$ of Example \ref{exampleSegmentoDic2011} restricted to ${\widetilde X}$ is semi-conjugated to the shift map on the dense set  $\Sigma$ in the full-shift of two symbols.
Its attractor coincides with its limit set and is a Cantor set or an interval.
In this sense, it is a more complex attractor than whose of the two-dimensional map $g$ or of the example of \cite{KR06} which it is inspired of, which reduces to a single point.
Here (almost) all the orbits of the iterated function system $\varphi$ are ghost orbits attracting the points of ${\widetilde X}$ and the asymptotic dynamics of $f$ has sensitivity to initial conditions.
As an illustration, Figure \ref{openexample} shows numerical simulations of the orbit of a point of $\widetilde{X}$ in the cases where the attractor is a Cantor set ($L$) and where it is an interval ($R$). It also shows the separation of two orbits with close initial conditions ($C$).

\section{Concluding Remarks}

As shown in this paper, piecewise contracting maps can exhibit a large variety of asymptotic behaviours 
supported by attractors ranging from finite to connected and chaotic. 
The property of being contracting in each piece of a dense union of disjoint open pieces determines therefore less characteristics of the asymptotic dynamics than one might expect. It is nevertheless possible to find general conditions ensuring the periodicity, the negligibility in measure, the total disconnectedness, or an upper bound for the Hausdorff dimension of the attractor. Possibly, the currently known sufficient conditions for the genericity of the periodic attractors could also be loosen.

In order to go further in the description of the asymptotic dynamics, one could consider specifics classes of piecewise contractions. As shown by our examples, piecewise affine maps is an interesting class which is already sufficiently general to observe a wide diversity of attractors. It remains then to find the conditions, for instance on the geometry of the discontinuity set and on the way the dynamics accumulates on it, that may impose a given topology and complexity to the asymptotic dynamics.

The study of the dynamics on the asymptotic sets requires the generalization of several basic notions of topological dynamics. 
Our definition of ghost orbit, as the image of a well ordered set, is aimed to give sense to
transitivity and recurrence for attractors containing discontinuity points. Taking into account the observed phenomenology, one could 
study the possible relations between the characteristics of the map (dimension of the space and number of pieces, for instance) and the order type of its ghost orbits.
This and other related questions require further studies on topological 
dynamics with ghost orbits, which could be done in a general framework not necessarily restricted  
to piecewise contracting maps.

We have focused on the topological type and recurrence properties
of the asymptotic sets of piecewise contracting maps, but almost all remains to 
be done concerning the measure theoretical aspects of these systems. Some important results have been obtained in the case of piecewise affine contractions
of the interval, but very few is known about the existence and the characterization of
invariant measures in other cases.

\vspace{5ex}

\noindent {\bf \emph{Acknowledgements.}}
EC was partially financed by CSIC of Universidad de la Rep\'ublica (UdelaR) and ANII of Uruguay.
PG has been supported by grant of Chile FONDECYT 1100764.
AM was partially financed by the scientific cooperation between the IVIC and the UdelaR.
EU gratefully acknowledges SEP-Mexico.
During the development of this work, EC, AM and EU were invited at the Universidad de Vapara\'iso and benefited from the financial support of FONDECYT 1100764.
AM and EU were also invited at the Universidad de la Rep\'ublica and benefited from the financial support of CSIC.

\appendix

\section{Appendix}\label{sectionProofsGeneral}

\subsection{Density of $\widetilde{X}$}

\begin{Proposition} \label{TheoremDensidadXtildePierre}
Let $f:X\to X$ be a piecewise contracting map such that for all $i\in\{1,2,\dots, N\}$ the restricted map $f|_{X_i} : X_i \to f(X_i)$ is an homeomorphism.
Then $\widetilde X := \bigcap _{n= 0}^{+ \infty} f^{-n}(X\setminus\Delta)$ is dense in $X$.\footnote{Proposition \ref{TheoremDensidadXtildePierre} holds also for piecewise continuous maps under the same hypothesis on the restricted maps.}
\end{Proposition}

\begin{proof}
The space $X$ being a compact metric space, it satisfies the Baire property. Therefore, as $\widetilde{X}$ is the countable intersection of the set $\widetilde X_k:= \bigcap _{n= 0}^{k} f^{-n}(X\setminus\Delta)$, it is enough to show that $\widetilde X_k$ is open and dense in $X$ for all $k \in \mathbb{N}$ (which is trivially true for $k=0$). First note that 
\begin{equation}\label{DECOMPXTILDE}
\widetilde X_{k+1}=\bigcup_{i=1}^N (f|_{X_i})^{-1}(\widetilde X_{k}) \quad\forall\,k\in\N.
\end{equation}
Now suppose that $\widetilde X_{k_0}$ is open and dense in $X$ for a $k_0\in\N$ and let $i\in\{1,\dots, N\}$. 
If $f|_{X_i}$ is a homeomorphism, then $\widetilde X_{k_0}\cap f(X_i)$ and $(f|_{X_i})^{-1}(\widetilde X_{k_0})=(f|_{X_i})^{-1}(\widetilde X_{k_0}\cap f(X_i))$ are open sets, and from (\ref{DECOMPXTILDE}) it follows that $\widetilde X_{k_0+1}$ is open.
On the other hand, as $f(X_i)$ is open and $\widetilde X_{k_0}$ is dense in $X$, we have that $\widetilde X_{k_0}\cap f(X_i)$ is dense in $\overline{f(X_i)}$.
Thus, by continuity, $(f|_{X_i})^{-1}(\widetilde X_{k_0})$ is dense in $\overline{X_i}$.
It follows from (\ref{DECOMPXTILDE}) that $ \overline{\widetilde X_{k_0+1}}=\bigcup_{i=1}^N \overline{(f|_{X_i})^{-1}(\widetilde X_{k_0})}=\bigcup_{i=1}^N \overline{X_i}=X$, that is, $\widetilde X_{k_0+1}$ is dense in $X$.  
\end{proof}

\subsection{\bf Proof of Theorem \ref{SEPARATION}}\label{proofTheoremSEPARATION}

The first key point in the proof of Theorem \ref{SEPARATION} is Lemma \ref{LemmaATTRACTOR}, which states that each atom of sufficiently large generation is included in a single continuity piece of $f$.
Since the atoms may be disconnected sets, its proof cannot simply argue that they are necessarily contained in a single continuity piece, just because they do not intersect $\Delta$.

\begin{lemma}\label{LemmaATTRACTOR}
If $\Delta = \emptyset$ or if $d(\Lambda,\Delta)\neq 0$, then there exists $n_0\geq 1$ such that for any atom $A$ of generation $n\geq n_0$ there is a $i\in\{1,\dots,N\}$ such that $A\subset X_i$.
\end{lemma}

\begin{proof} We first assert that there exists a uniform $\delta>0$ such that for any point $x_0 \in \Lambda$, the ball of radius $\delta$ and center $x_0$ is contained in $X_i$, where $X_i$ denotes the unique (open) piece such that $x_0 \in X_i$.
In fact, arguing by contradiction, let us assume that for all $n \in\N$ there exists $i_n\in\{1,\dots,N\}$ and two points $x_n \in \Lambda\cap X_{i_n}$ and $y_n \in X\setminus X_{i_n}$ such that $d(x_n, y_n) < 1/(n+1)$.
Now we replace the sequences $\{x_n\}_{n\in\N}$ and $\{y_n\}_{n\in\N}$ (and we keep the notation $x_n$ and $y_n$ for the respective replacements) by convergent subsequences such that $x_n$ belongs to a constant piece $X_i$ for all $n\in\N$.
Since $d(x_n, y_n) < 1/n$, we obtain that $\{x_n\}_{n\in\N}$ and $\{y_n\}_{n\in\N}$ converge to a same point $x \in \overline {X_i} \cap \overline {X \setminus X_i} \subset \Delta$.
As $\{x_n\}_{n\in\N}\subset \Lambda$, by compactness of $\Lambda$ we have $x \in \Lambda \cap \Delta$, contradicting the hypothesis. We have proved the first assertion.

Now recall that $\Lambda = \bigcap_{n= 1}^{\infty} \Lambda_{n}$, where the sets $\Lambda _n$ are non empty, compact and satisfy $\Lambda_{n+1} \subset \Lambda_n$. This implies the existence of $n_1\geq 1$ such that $ \max\limits_{y \in \Lambda_n} d(y,  \Lambda) < \delta/2$ for all $n \geq n_1$. On the other hand, the maximum diameter of the atoms of generation $n$ converges to 0 when $n$ tends to infinity. Therefore, there exists $n_2 \geq 1$ such that $\mbox{diam}(A) < \delta /2$ for all $A \in {\mathcal A}_n$ with $n \geq n_2$. To end the proof of this lemma, take any atom $A $ of generation $n\geq n_0:=\max\{n_1,n_2\}$ and choose a point $y_0 \in A$. Since $A \subset \Lambda_{n_0}$, we have $d(y_0, \Lambda) < \delta/2 $, that is $d(y_0,x_0) < \delta/2$ for some $x_0 \in \Lambda$. 
Then, for all $y \in A$, we have $d(y, x_0) \leq d(y, y_0) + d(y_0, x_0) \leq \mbox{diam}(A) + \delta/2 < \delta$.
In other words, $A$  is contained in the ball of radius $\delta$ with center at $x_0 \in \Lambda$.
From the first assertion proved at the beginning, the atom $A$ is contained in a unique continuity piece, which ends the proof of the lemma. 
\end{proof}

\begin{proof} [Proof of Theorem \em\ref{SEPARATION}]

{\bf 1)} Let $n\geq n_0$, where $n_0$ is such that Lemma \ref{LemmaATTRACTOR} is satisfied.
Let $A\in\mathcal{A}_{n}$, then there exists $i\in\{1,\dots,N\}$ such that $A\subset X_i$.
Together with the compactness of $A$ and the continuity of $f$ in $X_i$, it implies that $f(A)=\overline{f(A\cap X_i)}$.
Therefore, for all $n\geq n_0$ and $A\in\mathcal{A}_{n}$, the set $f(A)$ is an atom of generation $n+1$ and reciprocally, according to the definition of atom, any atom of generation $n+1$ is the image of an atom of generation $n$. It follows that

\begin{equation}\label{NOSE}
f(\Lambda_n)=\bigcup_{A\in\mathcal{A}_{n}}f(A)
            =\bigcup_{A\in\mathcal{A}_{n+1}}A=\Lambda_{n+1}
\qquad\forall\,n\geq n_0.
\end{equation}
Moreover, since any atom of generation $n+1$ is contained in an atom of generation $n$, for any $A\in{\mathcal A}_{n+1}$ with
$n\geq n_0$, there exists $A'\in{\mathcal A}_{n}$ such that $f(A)\subset A'$.

{\bf 2)} From 1) we deduce that $f:\Lambda_{n_0}\to \Lambda_{n_0}$ induces a transformation $G:{\mathcal A}_{n_0}\to {\mathcal A}_{n_0}$ given by $G(A)=A'$ if $f(A)\subset A'$.
All the orbits of this transformation are eventually periodic, because the collection ${\mathcal A}_{n_0}$ where it acts, is finite.
Thus, the atoms of generation $n_0$ are classified in those that are periodic by the transformation $G$, and those that are not periodic but, anyway, are eventually periodic.
We denote by ${\mathcal P}_{n_0}$ to the finite and not empty sub-collection of atoms in ${\mathcal A}_{n_0}$ that are periodic under the transformation $G$. Let $p_0 \geq 1$ be a multiple of the periods of all the atoms of ${\mathcal P}_{n_0}$, and let $m_0\geq 1$ be the number of different atoms in the family ${\mathcal A}_{n_0}$.
Then we have
$G^{m_0} (A) \in {\mathcal P}_{n_0} $ and $G^{p_0}(G^{m_0}(A)) = G^{m_0}(A)$ for all $A \in {\mathcal A}_{n_0}$.
From the definition of $G$, we deduce that $f^{p_0}(f^{m_0}(A))\subset f^{m_0}(A)$ for all $A\in {\mathcal A}_{n_0}$, or equivalently, 
\begin{equation}\label{PARAFIXPOINT}
f^{p_0}(A)\subset A\quad\forall\, A\in{\mathcal A}_{n_0+m_0}.
\end{equation}
From Lemma \ref{LemmaATTRACTOR}, it follows that $f^{p_0}$ is contracting in any atom of generation larger that $n_0$.
By compactness of the atoms, we deduce from (\ref{PARAFIXPOINT}) that each atom of $\mathcal{A}_{n_0 + m_0}$ contains exactly one periodic point of $f$. We denote $P$ this finite set of periodic points.

{\bf 3)} Recalling that $\Lambda_{n+1}\subset\Lambda_{n}$ for any $n\geq 1$, we can write
$
\Lambda=\bigcap_{n\in\N}\Lambda_n=\bigcap_{j\in\N}\Lambda_{jp_0+n_0+m_0}.
$
Using (\ref{NOSE}) we obtain
\begin{equation}\label{PORFIN}
\Lambda=\bigcap_{j\in\N}f^{jp_0}(\Lambda_{n_0+m_0})
       =\bigcap_{j\in\N}\bigcup_{A\in\mathcal{A}_{n_0+m_0}}f^{jp_0}(A).
\end{equation}
Each point of $P$ is obviously a point of $\Lambda$, and to end the proof of Theorem \ref{SEPARATION} it remains to show  that $\Lambda\subset P$. Let $y\in\Lambda$. By equality (\ref{PORFIN}), for any $j\in\N$ there exists an atom $A_j\in\mathcal{A}_{n_0+m_0}$ such that $y\in f^{jp_0}(A_j)$. Therefore, since $\mathcal{A}_{n_0+m_0}$ is finite, there exists $A\in\mathcal{A}_{n_0+m_0}$ and an increasing sequence $\{j_k\}_{k\in\N}$, such that $y\in f^{j_kp_0}(A)$ for all $k\in\N$. Denote by $y^{\ast} \in P \ $ the $p_0$-periodic point of $f$ in $A$. Since $y$ and $y^{\ast}$ belong both to $f^{j_kp_0}(A)$ for all $k\in\N$, we have $0 \leq d(y^{\ast},y)\leq\text{diam}(f^{j_kp_0}(A))\leq\lambda^{j_kp_0}\text{diam}(A) $ for all $k\in\N$. Thus $y=y^{\ast}$, as wanted.
\end{proof}

\subsection{Proof of Proposition \ref{OMEGALAMBDA}}\label{proofTheoremOMEGALAMBDA}

\noindent To prove that $L\subset\Omega$, take $y\in L$ such that $y\in\omega(x)$ for some $x \in \widetilde{X}$. Then, by definition, there exists a  sequence $\{n_k\}_{k \in \N}$ going to infinity such that $\lim\limits_{k \to \infty} f^{n_k}(x) = y$.
Thus, for all $\epsilon >0$ there exists $k_0\in \N$ such that $f^{n_{k_0+p}}(x)$ belongs to the ball $B(y,\epsilon)$ of center $y$ and radius $\epsilon$ for all $p\in\N$.
In other words, $f^{m_p}(f^{n_{k_0}}(x))\in B(y,\epsilon)$ for all $p\in\N$, where $\{m_p\}_{p \in \mathbb{N}}$ is  defined by $m_p=n_{k_0+p}-n_{k_0}$. Since $f^{n_{k_0}}(x)\in B(y,\epsilon)\cap\widetilde X$, we have $f^{m_p}(B(y, \epsilon) \cap \widetilde X)\cap B(y,\epsilon)\neq\emptyset $ for all $p\in\N$.
According to Definition \ref{definitionNoErrante}, the point $y\in\Omega$. As $\Omega$ is closed, we can conclude that $L\subset \Omega$.

Now we prove that $\Omega\subset\Lambda$. Let $y$ be non wandering.
Then, for all $\epsilon >0$ there exists  $\{n_k(\epsilon)\}_{k \in \N}$ going to infinity such that $f^{n_k(\epsilon)}(B(y,\epsilon)\cap\widetilde X)\cap B(y,\epsilon) $ for all $k \in \N$.
Thus, we can construct a sequence of points $\{y_j\}_{j\in\N}$ and a sequence of natural numbers $\{n_j\}_{j\in\N}$ going to infinity, such that $y_j \in f^{n_j}(B(y, 1/j) \cap \widetilde X) \cap B(y,1/j)$ for all $j\in\N$.
On  one hand, for any $j\in\N$ we have $y_j \in \Lambda_{n_j}$ because $y_j$ belongs to $f^{n_j}(\widetilde X)$ which is contained in $\Lambda_{n_j}$.
On the other hand, since $y_j\in B(y,1/j)$ for all $j\in\N$, we have $\lim\limits_{j \to \infty}y_j=y$.
Therefore, $\lim\limits_{j \to \infty} d(\Lambda_{n_j},y)=0$.
As $\Lambda_{n+1}\subset \Lambda_{n}$ for all $n \in \N$, it follows that $\lim\limits_{n \to \infty} d(\Lambda_n,y)=0$.
We deduce that $d(\Lambda,y) = 0$. So $y \in \Lambda$ and $\Omega\subset\Lambda$.

Finally, we prove the second assertion of Proposition \ref{OMEGALAMBDA}, for which $\widetilde X$
is supposed locally connected. Let $y \in \mbox{int}(\widetilde X)$ be non wandering and let $\epsilon_0 >0$ be such that $B(y,\epsilon)\subset \widetilde X$ is connected for all $\epsilon \in (0,\epsilon_0)$.
Therefore, for all $\epsilon \in (0,\epsilon_0)$ and $n\in\N$

\begin{equation} \label{equalityBolasNoCortanDelta}
f^n(B(y,\epsilon)) \subset \bigcup_{i=1}^{N} X_i,
\end{equation}
and, by continuity, $f^n(B(y,\epsilon))$ is connected.
Since $f^n(B(y,\epsilon)) = \bigcup_{i=1}^{N} (X_i\cap f^n(B(y,\epsilon))$ and the sets $X_i\cap f^n(B(y,\epsilon))$ are pairwise disjoint and open in $f^n(B(y,\epsilon)$, by connectedness of $f^n(B(y,\epsilon))$,  there exists $i_n\in\{1,\dots,N\}$ such that $f^n(B(y, \epsilon)) \subset X_{i_n}$.
It follows that, 
\begin{equation}\label{equationNoErranteTransformaBolas}
f^n(B(y,\epsilon))\subset B(f^n(y), \lambda^n\epsilon) \qquad\forall\, n \in \mathbb{N}, \quad\forall\, \epsilon\in(0, \epsilon_0).
\end{equation}
Let $\epsilon\in(0,\epsilon_0)$ and $\epsilon'=\epsilon/2$. Since $y$ is non wandering and $B(y,\epsilon')\subset\widetilde{X}$, there exists $n_0>0$ such that $f^{n_0}(B(y,\epsilon'))\cap B(y,\epsilon') \neq \emptyset$
and $\lambda^{n_0}<1/3$.
After (\ref{equationNoErranteTransformaBolas}) we have $B(f^{n_0}(y),\lambda^{n_0}\epsilon') \cap B(y,\epsilon') \neq \emptyset$, and it follows that $d(f^{n_0}(y),y)\leq(\lambda^{n_0}+1)\epsilon'<2\epsilon/3$. Let $x\in B(f^{n_0}(y),\lambda^{n_0}\epsilon)$. Then $d(x,y)\leq d(x,f^{n_0}(y))+d(f^{n_0}(y),y)<\lambda^{n_0}\epsilon+2\epsilon/3<\epsilon$. So we have proved that $f^{n_0}(B(y,\epsilon))\subset B(y,\epsilon)$ and using the continuity of $f^{n_0}$ in $B(y,\epsilon)$ we obtain
\begin{equation}\label{YPER}
f^{n_0}(\overline{B(y,\epsilon)})\subset \overline{B(y,\epsilon)}.
\end{equation}
It follows that $B(y,\epsilon)$ contains a unique periodic point of period $n_0$ which is contained in
\[
\bigcap_{\epsilon>0}B(y,\epsilon)=\{y\},
\] 
since \eqref{YPER} is true for any $\epsilon\in(0,\epsilon_0)$.

\subsection{Proof of Proposition \ref{CANTORBEN}}\label{PROOFCANTOR}

Let  $\widetilde{X}_g:= \bigcap _{n= 0}^{+ \infty} g^{-n}(b)$, and $p>b$ be in the (uncountable) set $K\cap{\widetilde X_g}$ and such that $g^n(p)\neq 0$ for all $n\in\N$. Consider the strictly increasing sequence $\{l_i\}_{i\in\N}$ of natural numbers defined inductively by:
\[
l_0 := \min\{n \geq 1   : g^n(p) < b\}\quad\text{and}\quad l_{i+1}
    := \min\{n   >  l_i : g^n(p) < g^{l_i}(p) \}\quad \forall\, i\in\N.
\]
Such sequence exists, since the orbit of $p$ is dense in $K$ and $g^n(p)\neq 0$ for all $n\in\N$.

Choose a real number $\rho \in (0,\lambda)$, and let $\phi: [0,b]\to [0,1]$ be a continuous, increasing and piecewise affine function such that:
\begin{equation}\label{EquationFuncionPhi}
\phi(0)=0, \quad \phi(b)=1  \quad\mbox{and}\quad \phi(g^{l_i}(p)) = \rho^{l_i} \quad \forall\, i \in \N.
\end{equation}
Recall that $f_1:[0,1]^2\to [0,1]^2$ is defined by $f_1(x,y)=(g(x,y),\lambda y)$ for all 
$(x,y)\in[0,1]^2$.

\begin{lemma} \label{lemmaFuncionPhi} {\em 1)} For any $y\in (0,1]$ there exists 
$n_0\geq 1$ such that $f_1^{n_0}(p,y) \in {X}_3$.

\noindent {\em 2)} Let $\{y_j\}_{j\in\N}\in (0,1]^{\N}$ and $\{m_j\}_{j\in\N}$ be defined by $m_j=\min \{n \in\N: \ f_1^n(p,y_j) \in \overline{X}_3\}$ for all $j\in\N$. If $\displaystyle{\limsup_{j\to+\infty} \frac{1}{j}\log y_j}\leq \log \lambda$, then $\lim\limits_{j \to+\infty} m_j=+\infty$.
\end{lemma}

\begin{proof} 1) Suppose that there exists $y\in(0,1]$ such that for all $n\geq 1$ we have $f_1^{n}(p,y) \notin{X}_3\cap[0,b]\times[0,1]$. Then $\phi(g^{l_i}(p))\geq \lambda^{l_i} y$ for all $i\in\N$, since $f_1^n(p,y) = (g^n(p), \lambda^n y)$ for any $n\in\N$. It follows that
$(\rho/\lambda)^{l_i}\geq y>0$ for all $i\in\N$, which is impossible since $\lambda>\rho$ and $\{l_i\}_{i\in\N}$ goes to infinity.

\noindent 2) Fix $j\in\N$ and let $i\in\N$ be such that $l_i\leq m_j< l_{i+1}$. Then, by definition of
$\{l_i\}_{i\in\N}$ we have $g^{m_j}(p )\geq g^{l_i}(p)$. Since $f_1^{m_j}(p,y_j)\in \overline{X}_3$, we have $g^{m_j}(p )<b$ and $\lambda^{m_j}y_j\geq\phi(g^{m_j}(p))\geq \rho^{l_i}\geq \rho^{m_j}$. We deduce that
\[
\frac{1}{j}\log y_j \geq\frac{m_j}{j}(\log\rho - \log\lambda)\quad\forall\, j > 0.
\]
As $\limsup\limits_{j\to+\infty}\frac{1}{j}\log y_j\leq \log \lambda$, there exists $j_0 \geq 1$ such that 
$\log\lambda\geq\frac{m_j}{j}(\log\rho - \log\lambda)$ for all $j > j_0$. We conclude that
\[
m_j\geq\frac{\log\lambda}{\log\rho-\log\lambda}j\quad\forall j>j_0,
\]
and $\lim\limits_{j\to+\infty} m_j=+\infty$.
\end{proof}

Let  $y \in (0,1]$, then from $1)$ of Lemma \ref{lemmaFuncionPhi} there exists $n_0=\min\{n\in\N : f_1^{n}(p,y) \in X_3\}$. Since $p\in{\widetilde X}_g$, the map $f_1^{n}$ is continuous in $(p,y)$ for any $n\in\N$, and there exists $\epsilon(y)>0$ such that the open ball $B((p,y),\epsilon(y))\subset X_1$ and $f_1^{n_0}(B((p,y),\epsilon(y)))\subset X_3$. Furthermore, the radius $\epsilon(y)$ can be chosen small enough to have:
\begin{equation} \label{TOUCHEPAS}
 f_1^n(B((p,y),\epsilon(y)))\cap\{b\}\times[0,1]=\emptyset\qquad\forall n\leq n_0.
\end{equation}
Consider the subset $\mathcal{D}$ of $X_1$ defined by
\begin{equation} 
\mathcal{D}:=\mathcal{B}\setminus\mathcal{C}\quad\text{with}\quad
\mathcal{B}:= \bigcup_{y\in(0,1]} B((p,y),\epsilon(y))\quad\text{and}\quad
\mathcal{C}:=\{(x,y)\in\overline{\mathcal{B}}:(x,y)\notin B((p,y),\epsilon(y))\}.
\end{equation}
The set $\mathcal{D}$ is open ($\mathcal{C}$ is a closed set) and  has the following properties:

\begin{lemma}\label{lemmaConstruccionDeD} \emph{1)} For any $(x,y) \in \mathcal{D}$ there  exists $\tilde{n}_0=\min \{n \in\N: \ f_1^n(x,y) \in X_3\}$ and $(i_0,i_1,\dots,i_{\tilde{n}_0-1})\in\{1,2\}^{\tilde{n}_0}$ such that $f_1^{\tilde{n}_0}(x,y) \in X_3$ and $f_1^n(x,y)$ and $f_1^n(p,y)$ belong to $\overline{X}_{i_n}\setminus\{b\}\times(0,1]$ for all $n\in\{0,\dots,\tilde{n}_0-1\}$.

\noindent \emph{2)} Let $\{(x_j,y_j)\}_{j\in\N}\in\mathcal{D}^\N$ and $\{\tilde{n}_j\}_{j\in\N}$ be defined by $\tilde{n}_j=\min \{n \in\N: \ f_1^n(x_j,y_j) \in X_3\}$ for all $j\in\N$. If $\displaystyle{\limsup _{j \rightarrow + \infty} \frac{1}{j}\log y_j \leq  \log \lambda}$, then
\[
\lim_{j \rightarrow +\infty} \tilde{n}_j = +\infty .
\]
\end{lemma}

\begin{proof} \noindent 1) Let $(x,y)\in\mathcal{D}$, then $(x,y)\in B((p,y),\epsilon(y))$. Now, let $n_0=\min\{n\in\N: f_1^{n}(p,y) \in X_3\}$ and $m_0=\min\{n\in\N: f_1^{n}(p,y) \in \overline{X}_3\}$. Note that $n_0\neq m_0$ if $f_1^{m_0}((p,y))\in\overline{X_2}\cap\overline{X_3}$, and then $n_0>m_0$.
If $n_0=m_0$ or $f_1^{m_0}(x,y)\in\overline{X}_2$, then $f_1^{n_0}(x,y)\in X_3$ and, by equation \eqref{TOUCHEPAS}, there exists $(i_0,i_1,\dots,i_{{n_0}-1})\in\{1,2\}^{n_0}$ such that $f_1^{n_0}(x,y)$ and $f_1^{n_0}(p,y)$ belong to 
$\overline{X}_{i_n}\setminus\{b\}\times(0,1]$ for all $n<n_0$. If $f_1^{m_0}(x,y)\in{X}_3$, then by equation \eqref{TOUCHEPAS}, there exists $(i_0,i_1,\dots,i_{m_0-1})\in\{1,2\}^{m_0}$ such that $f_1^{m_0}(x,y)$ and $f_1^{m_0}(p,y)$ belong to $\overline{X}_{i_n}\setminus\{b\}\times(0,1]$ for all $n<m_0$. The part 1) is proved for a $\tilde{n}_0$ which is either equal to $n_0$ or equal to $m_0$ and such that $\tilde{n}_0\geq m_0$. 

\noindent 2) If $\{(x_j, y_j)\}_{j\in\N}$ in $\mathcal{D}^\N$, then, by definition of $\mathcal{D}$, we have 
$(x_j,y_j)\in B((p,y_j),\epsilon(y_j))$ for all $j\in\N$. If moreover $\limsup\limits_{j\to+\infty} (\log y_j)/j  \leq \log \lambda$, using Lemma \ref{lemmaFuncionPhi}, we obtain  that $\lim\limits_{j\to+\infty}m_j=+\infty$. Since by the proof of $1)$ we know that  $\tilde{n}_j\geq m_j$ for any $j\in\N$, the sequence $\{\tilde{n}_j\}_{j\in\N}$ also goes to infinity. 
\end{proof}

Now, let $f_4:\overline{X}_4\to f_4(\overline{X}_4)$ be defined by $f_4(x,y)=(\lambda\epsilon(\lambda  y)(x+1)+p,\lambda y)$ for each $(x,y)\in\overline{X}_4$. Then $f_4(-1,0)=(p,0)$ and $f_4(X_4)\subset\mathcal{D}$. Moreover, $f_4$ is a contraction, realizes a bijection between the compact sets $\overline{X}_4$ and $f_4(\overline{X}_4)$ and, therefore, is a homeomorphism. 

Recall that $f_3:\overline{X_3}\to X$ is defined by $f_3(x,y) =\lambda(x+1,y)+(-1,0)$. Note that for any $(x,y)\in \overline{X}_3$, we have $f_3(x,y)\in \overline{X}_3\cup \overline{X}_4$ and if $y\neq 0$ then $\min\{n\in\N: f_3^n(x,y)\in \overline{X}_4\}<\infty$ and is a decreasing function of $y$.

Let $f:X\to X$ be such that $f|_{X_1}=f_1|_{X_1}$, $f|_{X_2}=f_1|_{X_2}$, $f|_{X_3}=f_3|_{X_3}$ and $f|_{X_4}=f_4|_{X_4}$. Since, all the continuous extensions of $f$ are homeomorphisms, the set $\widetilde{X}$ is dense.

\begin{lemma}\label{OMEGAGHOST} Let $(x,y)\in\mathcal{D}\cap\widetilde{X}$. There exists a strictly increasing sequence $\{n_j\}_{j\in\N}$ such that: 

\noindent 1) $\lim\limits_{k\to+\infty}(n_{3k+1}-n_{3k})=+\infty$, $\lim\limits_{k\to+\infty}(n_{3k+3}-n_{3k+2})=+\infty$ and $n_{3k+2}=n_{3k+1}+1$.

\noindent 2) For any $\epsilon>0$, there exists $k_0\in\N$ such that for all $k\geq k_0$ we have,
\begin{equation}\label{eqx3}
d(f^n(x,y),f_3^{n-n_{3k}}(0,0))<\epsilon\qquad\forall\,n\in[n_{3k},n_{3k+1}),
\end{equation}
\begin{equation}\label{eqx4}
d(f^{n_{3k+1}}(x,y),(-1,0))<\epsilon,
\end{equation}
and
\begin{equation}\label{eqx1}
d(f^n(x,y),f_1^{n-n_{3k+2}}(p,0))<\epsilon\qquad\forall\,n\in[n_{3k+2},n_{3k+3}).
\end{equation}
\end{lemma}

\begin{proof} Let $(x,y)\in\mathcal{D}\cap\widetilde{X}$. 
By 1) of Lemma \ref{lemmaConstruccionDeD}  the orbit of $(x,y)$ by $f$ enters in $X_3$ from $X_1\cup X_2$ at time $\tilde{n}_0$. Then it remains in $X_3\setminus\{(x,y)\in X: y=0\}$ a finite time (any point of this set is mapped by $f_3$ in $X_4$ in a finite time) and stays one time step in $X_4$ before going back to $\mathcal{D}$ (recall that $f(X_4)\subset\mathcal{D}$). It follows that there exists a strictly increasing sequence $\{n_j\}_{j\in\N}$ such that
for any $k\in\N$, we have 
\[
f^n(x,y)\in X_3\quad\forall\,n\in[n_{3k},n_{3k+1}),\quad f^{n_{3k+1}}(x,y)\in X_4,\quad f^n(x,y)\in X_1\cup X_2\quad\forall\,n\in[n_{3k+2},n_{3k+3}).
\]

\noindent 1)  If we denote $(x_n,y_n):=f^n(x,y)$, then $y_n=\lambda^n y$ for all $n\in\N$. Therefore, by properties of $f_3$, we have $\lim\limits_{k\to\infty}(n_{3k+1}-n_{3k})=+\infty$. Moreover, $\lim\limits_{n \to + \infty} \frac{1}{n}\log y_n = \log \lambda$ and, by 2) of Lemma \ref{lemmaConstruccionDeD}, the sequence defined by $\tilde{n}_0=n_0$ and $\tilde{n}_{k+1}=n_{3k+3}-n_{3k+2}$ for all $k\in\N$ goes to infinity. Now, since $f(X_4)\subset\mathcal{D}$, we can chose $\{n_j\}_{j\in\N}$ such that $n_{3k+2}=n_{3k+1}+1$.

\noindent 2)  Since $f^{n_{3k+2}}(x,y)\in \mathcal{D}$, by 1) of Lemma \ref{lemmaConstruccionDeD}, for any $k\in\N$ and for all $n\in[n_{3k+2},n_{3k+3}]$ we have,
\begin{eqnarray}\label{DEM1}
d(f^n(x,y), f_1^{n-n_{3k+2}}(p,0))&\leq& d(f^n(x,y), f_1^{n-n_{3k+2}}(p,y_{n_{3k+2}})) + d(f_1^{n-n_{3k+2}}(p,y_{n_{3k+2}}), f_1^{n-n_{3k+2}}(p,0))\nonumber\\
&\leq&\lambda^{n-n_{3k+2}}d(f^{n_{3k+2}}(x,y),(p,y_{n_{3k+2}})) + \lambda^{n-n_{3k+2}}d((p,y_{n_{3k+2}}),(p,0))\nonumber\\
&\leq& \lambda^{n-n_{3k+2}}d(f^{n_{3k+2}}(x,y),(p,y_{n_{3k+2}}))+\lambda^n y\nonumber\\
&\leq& \lambda^{n-n_{3k+2}}d(f^{n_{3k+2}}(x,y),(p,0))+2\lambda^n y.
\end{eqnarray}
In particular, applying \eqref{DEM1} for $n=n_{3k+3}$, we obtain 
\[
d(f^{n_{3k+3}} (x, y), K )\leq \lambda^{\tilde{n}_{k+1}}d(f^{n_{3k+2}}(x,y),(p,0))+2\lambda^{n_{3k+3}}\quad\forall\, k\in\N.
\]
As $\{\tilde{n}_k\}_{k\in\N}$ goes to infinity, $f^{n_{3k+3}} (x, y)\in X_3$ and $f_1^{\tilde{n}_k}(p,0)\in K$ for all $k\in\N$,
we have 
\[
\lim _{k \to + \infty} f^{n_{3k+3}}(x, y)\in K\cap\overline{X}_3=\{(0,0)\}.
\]
On the other hand, since $f^{n_{3k+1}}(x,y)\in X_4$ for all $k\in\N$ and $\lim\limits_{k\to+\infty}{y_{3k+1}}=0$, we have
\[
\lim_{k\to+\infty}f^{n_{3k+1}}(x,y)=(-1,0).
\]
Let $\epsilon>0$, then there exist $k_0\in\N$ such that   
\[
d(f^{n_{3k}}(x,y),(0,0))<\epsilon,\quad 
d(f^{n_{3k+1}}(x,y),(-1,0))<\epsilon\quad\text{and}\quad
\frac{\lambda^{n_{3k+2}}}{1-\lambda}<\frac{\epsilon}{2}\qquad \forall\, k\geq k_0,
\]
which, in particular, proves \eqref{eqx4}. From now on we suppose $k\geq k_0$. Since $f_4(f^{n_{3k+1}}(x,y))=f^{n_{3k+2}}(x,y)$, using \eqref{DEM1}, we obtain 
\[
d(f^n(x,y), f_1^{n-n_{3k+2}}(p,0))\leq \lambda^{n-n_{3k+2}}d(f_4(f^{n_{3k+1}}(x,y)),f_4(-1,0))+2\lambda^n y\leq\lambda\epsilon+2\lambda^{n_{3k+2}}<\epsilon,
\]
for any $n\in[n_{3k+2},n_{3k+3}]$, and \eqref{eqx1} is proven. Finally, \eqref{eqx3} follows from
\[
d(f^n(x,y),f_3^{n-n_{3k}}(0,0))<\lambda^{n-n_{3k}}d(f^{n_{3k}}(x,y),(0,0))<\epsilon\quad\forall\,n\in[n_{3k},n_{3k+1}].
\]
\end{proof}

From Lemma \ref{OMEGAGHOST} it follows immediately that $\omega(x,y)=S\cup K$ for any 
$(x,y)\in\mathcal{D}\cap\tilde{X}$, and therefore for any $(x,y)\in\widetilde{X}$ with $y\neq 0$ which orbit
visits $\mathcal{D}\cup X_3\cup X_4$. Moreover, all these points are in the basin of attraction
of the stable ghost orbit $\Phi: \mathbb{N} \times \mathbb{N} \to \Lambda$ defined by $\Phi(2n,k) = f_3^k(\lambda-1,0)$ and $\Phi(2n+1,k) = f_1^k(p,0)$ for all $n$ and $k\in\N$. This prove in particular that $S\cup K$ is transitive and all its points are recurrent. Moreover, it is easy to show that $\omega(x,y)=(-1,0)$ for any $(x,y)\in\widetilde{X}\cap X_3$ with $y=0$ and $\omega(x,y)=K$ for any $(x,y)\in\widetilde{X}\cap (X_1\cup X_2)$ which orbit never enters in $\mathcal{D}$. It follows that $L=S\cup K$.

\subsection{Proof of Theorem \ref{TheoremMedidaNula}}\label{proofthmedida}
First notice that, for any set $S\subset X$, we have $\overline{f(S \cap X_i)} = f_i(\overline{S} \cap \overline{X_i})$,
where $f_i:\overline{X}_i\to X$ is the continuous extension of $f|_{X_i}$ to $\overline{X_i}$. Therefore, for all $k\in\N$ we can 
write $\Lambda_{k+1}$ as
\[
\Lambda_{k+1} =  \bigcup_{i= 1}^N\overline{f(\Lambda_k \cap X_i)}=\bigcup_{i= 1}^N{f_i(\Lambda_k \cap \overline{X_i})}  = \Big(\bigcup_{i= 1}^N{f (\Lambda_k \cap {X_i})}\Big) \, \bigcup \, \Big(\bigcup_{i= 1}^N{f_i(\Lambda_k \cap {\partial X_i})}\Big).
\]

Let $\mathcal{H}^s$ be the $s$-dimensional Hausdorff measure and let $k\in\N$, then
\begin{equation}\label{HLK}
 \mathcal{H}^s(\Lambda_{k+1}) \leq \mathcal{H}^s\Big(\bigcup_{i= 1}^N{f (\Lambda_k \cap {X_i})}\Big) \, + \, \mathcal{H}^s\Big(\bigcup_{i= 1}^N{f_i(\Lambda_k \cap {\partial X_i})}\Big).
\end{equation}
For the first term on the right hand side, we have
\[
 \mathcal{H}^s\Big(\bigcup_{i= 1}^N{f (\Lambda_k \cap {X_i})}\Big)  \leq  \sum_{i=1}^{N} \mathcal{H}^s \Big( f (\Lambda_k \cap {X_i}) \Big) \leq  \sum_{i=1}^{N} \lambda^s \mathcal{H}^s( \Lambda_k \cap {X_i} ) =\lambda^s \mathcal{H}^s( \Lambda_k\setminus\Delta ),
\]
where the last inequality follows from the fact that  $f$ is a $\lambda$-Lipchitz function in any set $X_i$, and the last equality  follows from the disjointness of the Borel sets $\Lambda_k \cap {X_i}$. For the second term on the right hand side of \eqref{HLK}, we have:
\[
\mathcal{H}^s\Big(\bigcup_{i= 1}^N{f_i(\Lambda_k \cap {\partial X_i})}\Big)  \leq  \sum_{i=1}^{N} \mathcal{H}^s \big( f_i(\Lambda_k \cap {\partial X_i}) \big) \leq \sum_{i=1}^{N} \lambda^s \mathcal{H}^s(\Lambda_k \cap {\partial X_i})\leq 
\lambda^s \sum_{i=1}^{N}\mathcal{H}^s({\partial X_i}),
\]
since for any $i\in\{1,\dots,N\}$ the map $f_i$ is also a $\lambda$-Lipchitz function.
Therefore, we have proved that
\[
 \mathcal{H}^s(\Lambda_{k+1})  \leq  \lambda^s (\mathcal{H}^s( \Lambda_k ) - \mathcal{H}^s(\Delta) + \sum_{i=1}^{N}\mathcal{H}^s({\partial X_i}))\quad\forall\,k\in\N.
\]
Let $\delta=\sum_{i=1}^{N}\mathcal{H}^s({\partial X_i})- \mathcal{H}^s(\Delta)$, then by induction we obtain:
\[
 \mathcal{H}^s(\Lambda_{k+1})  \leq  \lambda^{sk} \mathcal{H}^s( \Lambda_1 ) + \delta\sum_{j=1}^{k} \lambda^{sj}\leq\lambda^{sk} \mathcal{H}^s( X ) + \frac{\lambda^{s}(1-\lambda^{sk})}{1-\lambda^{s}}\delta\quad\forall\,k\in\N.
\]
Since $\Lambda\subset \Lambda_{k+1}$ for any $k$, it follows that
\[
 \mathcal{H}^s(\Lambda)  \leq  \lim_{k\to\infty}\lambda^{sk} \mathcal{H}^s( X ) + \frac{\lambda^{s}}{1-\lambda^{s}}\delta.
\]
Recalling that $X\subset\R^n$ and that $\mathcal{H}^n(A)=l_n(A)$ for any Borel set $A\subset\R^n$, we deduce that
\[
l_n(\Lambda)  \leq   \frac{\lambda^{n}}{1-\lambda^{n}}\left(\sum_{i=1}^{N}l_n({\partial X_i})- l_n(\Delta)\right),
\]
since $l_n(X)<\infty$. It follows that $l_n(\Lambda)=0$, if $l_n(\Delta)=0$. 

\subsection{Proof of Theorem \ref{teoremaTotalmenteDesconexo}}\label{proofTheoremTotallyDisconnected}

If hypothesis 1) of Theorem \ref{teoremaTotalmenteDesconexo} is verified, then, for any $n\geq n_0$, the different atoms of $\mathcal{A}_{n}$ form a finite partition of  $\Lambda_n$ into compact pairwise disjoint sets. Therefore, for any $n\geq n_0$, a connected component of $\Lambda$ belongs to a single atom of generation $n$, and thus it is of diameter arbitrarily small.

The total disconnectedness of the attractor, in the case where hypothesis 2) of Theorem \ref{teoremaTotalmenteDesconexo} holds, is a corollary of the following proposition:

\begin{Proposition}\label{DIMHAUS} Let $f:X\to X$ be a piecewise contracting map  such that 
\begin{equation}\label{CONDDIMH}
\lim_{n\to\infty}\#\mathcal{A}_n\lambda^{ns}=0
\end{equation}
for some $s>0$. Then the $s$-dimensional Hausdorff measure of the attractor of $f$ satisfies $\mathcal{H}^s(\Lambda)=0$. \end{Proposition}

\begin{proof} Let $\alpha> 0$ and $n_0$ be such that $\#\mathcal{A}_n\lambda^{ns}<\alpha$ for any $n\geq n_0$. Fix $\delta>0$, and let $n_1$ be such that for any $n>n_1$ we have $\mbox{diam}(A)<\delta$ for all $A\in\mathcal{A}_n$. Let $m=\max\{n_0,n_1\}$, then the atoms of $\mathcal{A}_m$ form a $\delta$-cover of $\Lambda$ and we have
\[
\mathcal{H}^{s}_\delta(\Lambda):=\inf\left\{\sum_{i\in I} \mbox{diam}(U_i)^{s} : \{U_i\}_{i\in I}\ \delta\text{-cover of } \Lambda\right\}\leq\sum_{A\in\mathcal{A}_m}\mbox{diam}(A)^{s}\leq\#\mathcal{A}_m\lambda^{ms}\mbox{diam}(X)^{s}<\alpha\mbox{diam}(X)^{s}.
\] 
Here we have used the fact that $A_{i_1i_2\dots i_n}=\overline{f(A_{i_1i_2\dots i_{n-1}}\cap X_{i_n})}$ for any atom $A_{i_1i_2\dots i_n}\in\mathcal{A}_n$, wich implies  
\[
\mbox{diam}(A_{i_1i_2\dots i_n})\leq\lambda\mbox{diam}(A_{i_1i_2\dots i_{n-1}})\leq\lambda^{n-1}\mbox{diam}(A_{i_1})\leq\lambda^n\mbox{diam}(X_{i_1})\leq\lambda^n\mbox{diam}(X)\quad \forall\, n> 1.
\]
Since $\delta$ is arbitrary, we have  $\mathcal{H}^{s}_\delta(\Lambda)<\alpha\mbox{diam}(X)^{s}$ for any $\delta>0$ and it follows that
\begin{equation}\label{HYPHAUS}
\mathcal{H}^{s}(\Lambda):=\lim_{\delta\to 0}\mathcal{H}^{s}_\delta(\Lambda)\leq\alpha\mbox{diam}(X)^{s}.
\end{equation}
As \eqref{HYPHAUS} is true for any $\alpha>0$, we deduce that $\mathcal{H}^{s}(\Lambda)=0$.
\end{proof}
If \eqref{CONDDIMH} holds for $s=1$, then $\mathcal{H}^1(\Lambda)=0$, which implies that $\Lambda$ is totally disconnected (see proof of Proposition 2.5 of \cite{FAL}).
Another immediate corollary of Proposition \ref{DIMHAUS} is that \eqref{CONDDIMH} implies that the Hausdorff  dimension of the attractor of $f$ satisfies $\dim_{H}(\Lambda):=\inf\{s_0\geq 0:\mathcal{H}^{s_0}(\Lambda)=0\}\leq s$.

\bibliographystyle{amsplain}

\end{document}